\documentclass[12pt,reqno]{amsart}

\usepackage{amsmath}
\usepackage{amssymb}
\usepackage{amsthm}
\usepackage{mathtools}
\usepackage{enumitem}
\usepackage{graphicx}
\usepackage{tikz}
\usetikzlibrary{arrows.meta,calc,positioning}

\usepackage{microtype}
\usepackage{hyperref}

\makeatletter
\renewcommand{\subsection}{\@startsection{subsection}{2}{0pt}%
  {0.5\baselineskip}%
  {0.3\baselineskip}%
  {\normalfont\bfseries}}
\makeatother

\numberwithin{equation}{section}

\newtheorem{theorem}{Theorem}[section]
\newtheorem{lemma}[theorem]{Lemma}
\newtheorem{proposition}[theorem]{Proposition}

\theoremstyle{definition}
\newtheorem{definition}[theorem]{Definition}
\newtheorem{example}[theorem]{Example}

\usepackage[
    textwidth=6.25in,
    textheight=8.21in,
    centering
]{geometry}

\theoremstyle{definition}
\newtheorem{remark}[theorem]{Remark}

\newcommand{\YF}{\mathbb{YF}}
\newcommand{\PLab}{\operatorname{PLab}}
\newcommand{\arcjoin}{\mathrel{%
  \rule[0.1ex]{0.4pt}{1.1ex}%
  \rule[0.6ex]{1em}{0.4pt}%
  \rule[0.1ex]{0.4pt}{1.1ex}%
}}

\tikzset{
    okada diagram/.style={
    xscale=1.0,
    yscale=0.75,
    thick,
    baseline={(current bounding box.center)}
},
    okada label/.style={
        draw,
        fill=white,
        circle,
        inner sep=1pt,
        font=\scriptsize
    }
}

\newcommand{\OkadaBoundary}[1]{%
\foreach \i in {1,...,#1} {
\coordinate (L-\i) at (-1.6,{1.35*\i-1.35});
\coordinate (R-\i) at ( 1.6,{1.35*\i-1.35});
\node[left=2pt,font=\scriptsize] at (L-\i) {$\i$};
\node[right=2pt,font=\scriptsize] at (R-\i) {$\overline{\i}$};
    }%
}

\newcommand{\PropArc}[3]{%
    \draw (L-#1) -- (R-#2)
        node[pos=0.5,okada label] {#3};%
}

\newcommand{\LeftArc}[3]{%
    \draw (L-#1)
        .. controls +(0.75,0.35) and +(0.75,-0.35) ..
        (L-#2)
        node[pos=0.5,okada label] {#3};%
}

\newcommand{\RightArc}[3]{%
    \draw (R-#2)
        .. controls +(-0.75,-0.35) and +(-0.75,0.35) ..
        (R-#1)
        node[pos=0.5,okada label] {#3};%
}

\newcommand{\IdentitySix}{%
\begin{tikzpicture}[okada diagram]
    \OkadaBoundary{6}
    \foreach \i in {1,...,6}{
        \PropArc{\i}{\i}{\i}
    }
\end{tikzpicture}%
}

\newcommand{\GthreeSix}{%
\begin{tikzpicture}[okada diagram]
    \OkadaBoundary{6}
    \PropArc{1}{1}{1}
    \PropArc{2}{2}{2}
    \LeftArc{3}{4}{3}
    \RightArc{3}{4}{3}
    \PropArc{5}{5}{5}
    \PropArc{6}{6}{6}
\end{tikzpicture}%
}

\newcommand{\GfourSix}{%
\begin{tikzpicture}[okada diagram]
    \OkadaBoundary{6}
    \PropArc{1}{1}{1}
    \PropArc{2}{2}{2}
    \PropArc{3}{3}{3}
    \LeftArc{4}{5}{4}
    \RightArc{4}{5}{4}
    \PropArc{6}{6}{6}
\end{tikzpicture}%
}

\newcommand{\GthreeGfourSix}{%
\begin{tikzpicture}[okada diagram]
    \OkadaBoundary{6}
    \PropArc{1}{1}{1}
    \PropArc{2}{2}{2}
    \LeftArc{3}{4}{3}
    \PropArc{5}{3}{3}
    \RightArc{4}{5}{4}
    \PropArc{6}{6}{6}
\end{tikzpicture}%
}

\title{A Gelfand Model for the Okada Algebra}

\author{Harikrishnan T R}
\address{The Institute of Mathematical Sciences, Chennai, India}
\address{Homi Bhabha National Institute, Training School Complex, Anushakti Nagar, Mumbai 400094, India}
\email{haritr@imsc.res.in}

\keywords{Okada algebra, Gelfand model, diagram algebra,
Young--Fibonacci lattice}

\begin{document}

\begin{abstract}
In this paper, we construct a Gelfand model for the Okada algebra $O_n(X,Y)$ with generic parameters $X$ and $Y$, on the space of symmetric Okada arc diagrams using a conjugation-type action. The model is constructed inductively by identifying the Okada algebra as a diagram algebra and using the Jones basic construction to obtain a tower of algebras that are themselves Okada algebras at lower levels. We use the model to obtain all the irreducible representations of $O_n(X,Y)$, indexed by the elements of rank $n$ of the Young--Fibonacci lattice, and identify them with the cell modules of $O_n(X,Y)$.
\end{abstract}
\maketitle
\bigskip
\noindent

\section*{Introduction}

The classical Robinson correspondence asserts a bijection between the symmetric group $S_n$ and
pairs of standard Young tableaux of the same shape $\lambda \vdash n$. In \cite{FominRSK}, Fomin generalized this correspondence to a broad class of posets known as
\emph{differential posets}, which were introduced independently by Stanley in \cite{StanleyDifferential}. The classical example of a differential poset other than the Young lattice is the Young--Fibonacci lattice, and these two are the only $1$-differential posets which are lattices.
Fomin's generalized Robinson correspondence for differential posets gives a bijection between the symmetric group $S_n$ and pairs of saturated chains in the poset starting from the minimum element $0$ and ending at the same vertex at level $n$, using local growth rules.

A Gelfand model for a finite-dimensional semisimple algebra $A$ is a representation of $A$ which contains each irreducible representation of $A$ with multiplicity one.
By the Robinson correspondence, we see that the number of involutions in $S_n$ is the number of paths in the Young lattice starting from $0$ and ending at a partition of $n$. By basic representation theory of symmetric groups, this number is the sum of the dimensions of the irreducible representations of $S_n$. This suggests that there might exist a Gelfand model for the symmetric group on the space with basis the involutions in the group, with an appropriate action. Such models have been constructed for the symmetric group on the space of involutions with a signed conjugation action in \cite{KodiyalamVerma2004} and \cite{AdinPostnikovRoichman2008}.

In 1994, Okada introduced a tower of semisimple algebras $O_n(X,Y)$ with $X$ and $Y$ generic parameters, whose Bratteli diagram is the Young--Fibonacci lattice \cite{Okada}, similar to the Young lattice for the symmetric groups. He also constructed all the irreducible representations of $O_n(X,Y)$ and showed that they are indexed by the rank-$n$ elements of the Young--Fibonacci lattice. As in the symmetric group case, the number of involutions in $S_n$ again equals the sum of the dimensions of the irreducible representations of the $n$th Okada algebra $O_n(X,Y)$. This suggests the possibility of a Gelfand model for the Okada algebra on the space of involutions in $S_n$.

In \cite{Hivert-Scott,HivertScott}, Hivert and Scott realized the Okada algebra as a diagram algebra inspired by Viennot's theory of heaps of dimers. They do this by defining Okada arc diagrams, implementing the Okada relations on them and then showing that the Okada algebra is isomorphic to the diagram algebra generated by these arc diagrams. 
In \cite{Mazorchuk}, Mazorchuk constructed Gelfand models for some diagram algebras including the Brauer algebra and the Temperley--Lieb algebra.
In the 2015 paper \cite{HalversonReeks}, Halverson and Reeks constructed a Gelfand model for the partition algebra and some of its subalgebras, including the Brauer algebra, the Temperley--Lieb algebra and the planar partition algebra. They do this by identifying these algebras as diagram algebras and using the Jones basic construction to lift a Gelfand model from smaller algebras in the tower to the whole algebra.

For generic $X$ and $Y$, the Okada algebras $\{O_n(X,Y)\}_{n\geq 1}$ form a tower of split semisimple algebras, with $\dim O_n(X,Y)=n!$. Set $O_n:= O_n(X,Y)$. We construct an $O_n$-module $M_n$ on the space of symmetric Okada arc diagrams using a conjugation-type action:
\begin{equation*}
 d\cdot t=
\begin{cases}
\displaystyle \lambda(d,t)[d\circ t\circ d^*],
& \text{if } \PLab([d\circ t\circ d^*])=S,\\[6pt]
0,
& \text{otherwise}
\end{cases}
\end{equation*} where $d$ and $t$ are basis arc diagrams with $t$ symmetric, $\PLab(t)$ the Fibonacci set $S$, $\lambda(d,t)$ a scalar, $d^*$ the mirror image of $d$, $\PLab$ the propagating label set of an arc diagram and $[d\circ t\circ d^*]$ the monoid product of the arc diagrams. This module could equally be regarded as a module on the space of involutions in $S_n$ since there is a bijection between the set of involutions and the set of symmetric Okada arc diagrams of rank $n$ by \cite[Theorem 4.14]{HivertScott}.

Inspired by \cite{HalversonReeks}, we use the Jones basic construction to show that $M_n$ is a Gelfand model for $O_n$. We do this by identifying particular idempotents $e_n$ in $O_n(X,Y)$ (following \cite{HalversonRam}) such that there is a bijection between the irreducible representations of the ideal $J_n=O_ne_nO_n$ and those of $O_{n-2}$. The complementary subalgebra $C_n$ of $J_n$ in $O_n$ is isomorphic to $O_{n-1}$. This gives the recursion \[
\Lambda_{O_n}=\Lambda_{O_{n-2}}\sqcup\Lambda_{O_{n-1}},\]
where $\Lambda_{O_n}$ denotes an indexing set of irreducibles for $O_n$, which is the recursion satisfied by the rank sizes of the Young--Fibonacci lattice. This suggests that the irreducible representations of $O_n$ can be constructed inductively.

We see that the $O_n$-module $M_n$ defined as the direct sum of the submodules $M_n^S$ for each Fibonacci set $S$ of rank $n$ with the action defined above is a Gelfand model by showing that any two of the submodules have no common irreducibles and their characters satisfy certain relations; see \eqref{character relations}. Then the proof that $M_n$ is a Gelfand model is done at the character level since we work with split semisimple algebras. Later, we use the model to explicitly identify all the irreducible representations of the Okada algebra and to identify them with the simple cell modules of the algebra.

This model has some advantages besides being a natural construction. The proof proceeds through the Jones basic construction, via the ideal $J_n=O_ne_nO_n$ and the complementary subalgebra $C_n\cong O_{n-1}$, and the construction of the Gelfand model is inductive. We also obtain all the irreducible representations of $O_n$ without any prior information about the irreducibles beyond their dimensions, which are used in showing the split semisimplicity of the algebra. Apart from the information about the irreducible representations used to establish split semisimplicity, our construction does not use Okada's explicit realization of the irreducible modules. Although our construction and Okada's original construction are realized on the same underlying space (under the identification of a path with a symmetric Okada arc diagram), the resulting actions are quite different. In Okada's construction, a generator generally sends a basis element to a nontrivial linear combination of basis elements. In contrast, our conjugation-type action sends each basis element to a scalar multiple of a single basis element (possibly zero).

Sections 1--3 recall the Young--Fibonacci lattice and the diagram realization; Sections 4--6 construct the model; Section 7 identifies the irreducibles, and relates them to the cell modules. 
 
\section{The Young--Fibonacci lattice}

In \cite{StanleyDifferential}, Stanley defines the Young--Fibonacci lattice as a poset with vertices the finite words in the alphabet $\{1,2\}$. 
The \emph{Young--Fibonacci lattice} $\YF$ is the poset with covering relation:
$s\lessdot w$ if $s$ is obtained from $w$ by removing the first letter $1$ or
replacing any one $2$ occurring before the first occurrence of $1$ by $1$. We get the elements of rank $n$ (sum of letters is $n$) recursively as follows: the set of elements of rank $n$ is the union of elements obtained by prepending 2 to the elements of rank $(n-2)$ or prepending 1 to the elements of rank $(n-1)$. Throughout this paper, we use the convention $
F_1=1,F_2=2, F_n=F_{n-1}+F_{n-2}\quad (n\geq 3),$
so that the Fibonacci sequence is $1,2,3,5,8,\ldots.$
Thus the number of elements of rank $n$ of $\YF$ is the $n$th Fibonacci number $F_n$.
The Young--Fibonacci lattice is a graded graph with the grading the sum of letters of each word; see Figure \ref{YF1}.
\begin{figure}[ht]
\centering
\begin{tikzpicture}[scale=0.9, every node/.style={font=\small}]
\node (n1) at (0,1.5) {$1$};
\node (n2) at (-1.3,3) {$11$};     \node (n11) at (1.3,3) {$2$};
\node (n12) at (-2.6,4.5) {$111$}; \node (n21) at (0,4.5) {$21$};
\node (n111) at (2.6,4.5) {$12$};
\node (n112) at (-3.8,6) {$1111$}; \node (n22) at (-1.5,6) {$211$};
\node (n121) at (0.5,6) {$121$};  \node (n211) at (2.2,6) {$22$};
\node (n1111) at (3.8,6) {$112$};
\draw (n1)--(n2); \draw (n1)--(n11);
\draw (n2)--(n12); \draw (n2)--(n21);
\draw (n11)--(n21); \draw (n11)--(n111);
\draw (n12)--(n112); \draw (n12)--(n22);
\draw (n21)--(n22); \draw (n21)--(n121); \draw (n21)--(n211);
\draw (n111)--(n211); \draw (n111)--(n1111);
\end{tikzpicture}
\caption{The Young--Fibonacci lattice $\YF$ (ranks $1$--$4$).}
\label{YF1}
\end{figure}

 In this paper we will use an equivalent definition of the Young--Fibonacci lattice in terms of Fibonacci sets given by Hivert and Scott in \cite{Hivert-Scott}. 
\begin{definition}
    For a positive integer $n$, a \emph{Fibonacci set of rank $n$} is a subset $S=\{s_1<s_2<\cdots<s_k\}$ of $[n]:=\{1,2,\ldots,n\}$ such that $n \equiv k\pmod 2 $ and $s_i \equiv i \pmod 2$ for $1 \leq i \leq k$. We denote the set of Fibonacci sets of rank $n$ by $\YF_n$.
\end{definition}

\begin{definition}
    The \emph{Young--Fibonacci lattice} is the graded poset $\YF=\bigsqcup_{n\geq 1} \YF_n$ with the covering relation $S \lessdot T$ if and only if $S \in \YF_{n-1}$ and $T \in \YF_n$ for some $n\geq 2$ and one set is obtained from the other by removing its largest element.
\end{definition}
\begin{figure}[ht]
\centering

\begin{tikzpicture}[
    xscale=1.4,
    yscale=1.1,
    every node/.style={inner sep=2pt}
]

\node (n1) at (0,1) {$\{1\}_1$};

\node (n2a) at (-1,2) {$\{1,2\}_2$};
\node (n2b) at (1,2) {$\emptyset_2$};

\node (n3a) at (-2,3) {$\{1,2,3\}_3$};
\node (n3b) at (0,3) {$\{1\}_3$};
\node (n3c) at (2,3) {$\{3\}_3$};

\node (n4a) at (-3,4) {$\{1,2,3,4\}_4$};
\node (n4b) at (-1.5,4) {$\{1,2\}_4$};
\node (n4c) at (0,4) {$\{1,4\}_4$};
\node (n4d) at (1.5,4) {$\emptyset_4$};
\node (n4e) at (3,4) {$\{3,4\}_4$};

\draw (n1) -- (n2a);
\draw (n1) -- (n2b);

\draw (n2a) -- (n3a);
\draw (n2a) -- (n3b);
\draw (n2b) -- (n3b);
\draw (n2b) -- (n3c);

\draw (n3a) -- (n4a);
\draw (n3a) -- (n4b);

\draw (n3b) -- (n4b);
\draw (n3b) -- (n4c);
\draw (n3b) -- (n4d);

\draw (n3c) -- (n4d);
\draw (n3c) -- (n4e);

\end{tikzpicture}

\caption{The Young--Fibonacci lattice up to rank $4$.}
\label{YF2}
\end{figure} We say that a Fibonacci set $S$ is of rank $n$ if $S \in \YF_n$; see Figure \ref{YF2}.
Note that the subscripts in the vertices of the Young--Fibonacci lattice indicate the rank of the vertex.

\begin{remark}\label{YF equivalence}
    The two definitions of the Young--Fibonacci lattice are equivalent under the identification of a binary word with the set of the sums of its suffixes whose first digit is 1; see \cite[Propositions 4.8 and 4.9]{HivertScott}.
\end{remark}

\section{The Okada algebra as a diagram algebra}

In this section we quickly review the realization of the Okada algebra as a
diagram algebra. We refer the reader to \cite{Hivert-Scott,HivertScott} for
details. In \cite{Okada}, Okada introduced a tower of algebras
$\{O_n(X,Y)\}_{n\ge1}$ whose Bratteli diagram is the Young--Fibonacci lattice.
This construction parallels the tower of symmetric group algebras whose Bratteli diagram is the Young lattice.

\begin{definition}
Let $K$ be a field of characteristic $0$ and let
$X=(x_1,x_2,\ldots)$ and $Y=(y_1,y_2,\ldots)$ be two sequences of parameters
in $K$. For $n\ge1$, the \emph{Okada algebra} $O_n(X,Y)$ is the associative
unital $K$-algebra generated by $E_1,E_2,\ldots,E_{n-1}$ subject to the
relations
\begin{enumerate}
\item $E_i^2=x_iE_i$ for $1\le i\le n-1$,
\item $E_iE_j=E_jE_i$ for $|i-j|\ge 2$,
\item $E_{i+1}E_iE_{i+1}=y_iE_{i+1}$ for $1\le i\le n-2$.
\end{enumerate}
Only the parameters $x_1,\ldots,x_{n-1}$ and $y_1,\ldots,y_{n-2}$ occur in
$O_n(X,Y)$.
\end{definition}

When we consider the \emph{generic} Okada algebra, we take the parameters $x_i$
and $y_i$ to be algebraically independent indeterminates over a field $K_0$ of
characteristic $0$ and work over the corresponding rational function field $K=K_0(x_1,\ldots,y_1, \ldots)$.
Unless otherwise specified, statements concerning semisimplicity and
irreducible representations are understood in this generic setting.

Okada showed that in this setting $O_n(X,Y)$ is semisimple of dimension $n!$,
and that its irreducible representations are indexed by the vertices of rank
$n$ of the Young--Fibonacci lattice \cite[Theorem 2.6]{Okada}.

Now we describe the realization of the Okada algebra as a diagram algebra.
We endow the set $\{1,2,\ldots,n\}\cup\{\overline1,\overline2,\ldots,\overline n\}$ with the
total order $1<2<\cdots<n<\overline n<\cdots<\overline2<\overline1$. We identify $\overline i$
with $-i$ for $1\le i\le n$, and for
$a\in\{1,\ldots,n,\overline n,\ldots,\overline1\}$ we write $|a|$ for its absolute
value; thus $|i|=|\overline i|=i$.

\begin{definition}
A rank $n$ \emph{Okada arc diagram} is a noncrossing perfect matching linking
the boundary vertices $\{1,\ldots,n\}$ and $\{\overline1,\ldots,\overline n\}$ in which
every arc $a\arcjoin b$ is assigned a height label
$h(a\arcjoin b)\in\mathbb{N}$ satisfying
\begin{enumerate}
\item $1\le h(a\arcjoin b)\le\min(|a|,|b|)$,
\item $h(a\arcjoin b)\equiv\min(|a|,|b|)\pmod 2$,
\item if the arc $a\arcjoin b$ is nested inside the arc $c\arcjoin d$, then
$h(a\arcjoin b)>h(c\arcjoin d)$, where we say that $a\arcjoin b$ is nested
inside $c\arcjoin d$ if $c<a<b<d$.
\end{enumerate}
See Figure \ref{fig:okada-examples} for examples.
An arc joining vertices on opposite sides is called \emph{propagating}.
\end{definition}
We use the notation $h(a\arcjoin b)=i$ to denote an arc $a\arcjoin b$ with height label $i$. 
\begin{definition}
Let $d_1$ and $d_2$ be Okada arc diagrams of rank $n$. The \emph{composition}
$d_1\circ d_2$ is obtained by placing $d_2$ to the right of $d_1$, identifying
the vertex $\overline i$ on the right boundary of $d_1$ with the vertex $i$ on the
left boundary of $d_2$ for $1\le i\le n$, and concatenating the corresponding
arcs.

The composition $d_1\circ d_2$ may contain closed loops. Each arc or loop in
$d_1\circ d_2$ is assigned the minimum of the height labels of the arc
fragments of $d_1$ and $d_2$ from which it is formed. We denote by
$[d_1\circ d_2]$ the diagram obtained from $d_1\circ d_2$ by deleting all
closed loops.
\end{definition}

We denote the set of Okada arc diagrams of rank $n$ by $\mathcal D_n$. With the
multiplication $(d_1,d_2)\mapsto[d_1\circ d_2]$, the set $\mathcal D_n$ forms a
monoid with identity element the diagram with $n$ propagating arcs
$h(i\arcjoin\overline i)=i$, $i=1,\ldots,n$; see
\cite[Theorem 3.43]{HivertScott}.

\begin{definition}
For $1\le i\le n-1$, define the \emph{elementary Okada arc diagram} $G_i$ to be
the Okada arc diagram of rank $n$ with the propagating arcs $j\arcjoin\overline j$
for $j\ne i,i+1$ with height labels $j$, and the non-propagating arcs
$i\arcjoin i+1$ and $\overline i\arcjoin\overline{i+1}$ with height label
$h(i\arcjoin i+1)=h(\overline i\arcjoin\overline{i+1})=i$.
\end{definition}

The diagrams $G_1,\ldots,G_{n-1}$ generate the monoid $\mathcal D_n$; see
\cite[Corollary 3.59]{HivertScott}.
The diagram product can be extended to incorporate the parameters $X$ and $Y$.
For $d_1,d_2\in\mathcal D_n$, let $\lambda(d_1,d_2)\in K$ denote the monomial in
the parameters $X$ and $Y$ associated to the concatenation $d_1\circ d_2$, as
described in \cite[Corollary 6.15]{HivertScott}.

\begin{definition}
Let $\widetilde O_n(X,Y)$ be the vector space over $K$ with basis
$\mathcal D_n$. Define multiplication on basis elements by
\[
d_1d_2=\lambda(d_1,d_2)\,[d_1\circ d_2],
\qquad d_1,d_2\in\mathcal D_n,
\]
and extend it $K$-bilinearly.
\end{definition}

For $d\in\mathcal D_n$, let $E_d\in O_n(X,Y)$ denote the monomial in the
generators associated to $d$ in \cite[Definition 6.1]{HivertScott}. By
\cite[Proposition 6.2]{HivertScott}, the map $d\mapsto E_d$ extends to a linear
isomorphism $\widetilde O_n(X,Y)\to O_n(X,Y)$ carrying
$d_1d_2=\lambda(d_1,d_2)[d_1\circ d_2]$ to
$E_{d_1}E_{d_2}=\lambda(d_1,d_2)E_{[d_1\circ d_2]}$; in particular, the
multiplication on $\widetilde O_n(X,Y)$ is associative and unital, and
\[
\widetilde O_n(X,Y)\cong O_n(X,Y).
\]
Under this isomorphism the generator $E_i$ corresponds to the elementary Okada
arc diagram $G_i$ \cite[Corollary 3.61]{HivertScott}. Thus, from now on, we
identify $O_n(X,Y)$ with its diagram realization $\widetilde O_n(X,Y)$ and
regard the Okada arc diagrams of rank $n$ as a basis of $O_n(X,Y)$.

\begin{figure}[ht]
\centering
\[
\begin{array}{c@{\qquad\qquad}c}
\IdentitySix & \GthreeSix \\[-0.01mm]
\mathrm{id}_6 & G_3
\end{array}
\]
\vspace{0.5em}
\[
\bigl[\;\GthreeSix
\quad\circ\quad
\GfourSix\;\bigr]
\quad=\quad
\GthreeGfourSix
\]
\caption{The identity diagram, the elementary diagram $G_3$, and the product
$[G_3\circ G_4]$ in rank $6$.}
\label{fig:okada-examples}
\end{figure}

For $d'\in\mathcal D_{n-1}$, let $\iota_{n-1}(d')\in\mathcal D_n$ denote the
Okada arc diagram obtained from $d'$ by adding the labelled propagating arc
$h(n\arcjoin\overline n)=n$. This defines an injective monoid morphism
$\iota_{n-1}\colon\mathcal D_{n-1}\to\mathcal D_n$
\cite[Remark 3.48]{HivertScott}.

\begin{proposition}[{\cite[Propositions~3.56 and 3.57]{HivertScott}}]\label{J_n}
Let $d$ be an Okada arc diagram of rank $n$ which does not contain the arc
$h(n\arcjoin\overline n)=n$, and let $i$ be the largest integer such that $d$
contains the arc $h(\overline i\arcjoin\overline{i+1})=i$; such an $i$ exists by
\cite[Corollary 3.53]{HivertScott}. Then there exists a unique Okada arc
diagram $d'$ of rank $n-1$ such that
\[
d=\bigl[\iota_{n-1}(d')\circ G_{n-1}\circ G_{n-2}\circ\cdots\circ G_i\bigr].
\]
\end{proposition}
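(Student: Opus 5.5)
We prove existence by an explicit "cut and shift" construction on the right boundary of $d$, and uniqueness by checking that right multiplication by the fixed word $G_{n-1}\circ\cdots\circ G_i$ can be undone on the image of $\iota_{n-1}$.

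\textbf{Understanding the word.} First we compute the diagram $g_i:=[G_{n-1}\circ G_{n-2}\circ\cdots\circ G_i]$ by induction on $n-i$. The expected outcome is:
\begin{itemize}[label={}]
\item[(a)] propagating arcs $h(j\arcjoin\overline j)=j$ for $j<i$;
\item[(b)] the right cap $h(\overline i\arcjoin\overline{i+1})=i$;
\item[(c)] propagating arcs $j\arcjoin\overline{j+1}$ for $i\le j\le n-2$, with shifted labels;
\item[(d)] the left cap $h(n-1\arcjoin n)=n-1$.
\end{itemize}
This matches the picture of $[G_3\circ G_4]$ in Figure~\ref{fig:okada-examples}. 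Consequently, $[\iota_{n-1}(d')\circ g_i]$ is read off from $d'$ as follows. The last propagating strand $n\arcjoin\overline n$ of $\iota_{n-1}(d')$ meets the cap $n-1\arcjoin n$ of $g_i$ and merges with whatever arc of $d'$ ends at $\overline{n-1}$. The right boundary vertices $\overline{i},\dots,\overline{n-1}$ of $d'$ are shifted to $\overline{i+1},\dots,\overline n$, and the new cap $h(\overline i\arcjoin\overline{i+1})=i$ is inserted. No loops are formed. The labels are minima along the strands, and we must check that the strand coming from $\overline{n-1}$ of $d'$, now extended to end at the left vertex $n$, keeps its label.

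\textbf{Existence.} Given $d$ and $i$ as in the statement, we define $d'$ by reversing the construction above:
\begin{itemize}[label={}]
\item[(1)] delete the arc $h(\overline i\arcjoin\overline{i+1})=i$;
\item[(2)] shift the right vertices $\overline{i+2},\dots,\overline n$ down by one;
\item[(3)] reroute the arc of $d$ incident to the left vertex $n$ so that it ends at the new right vertex $\overline{n-1}$.
\end{itemize}
The assumption that $d$ does not contain $h(n\arcjoin\overline n)=n$ is used here. We must show two things: that the result is a noncrossing perfect matching, and that the labels satisfy conditions (1)--(3) of the definition. For noncrossing we use maximality of $i$: every arc to the right of $\overline{i+1}$ on the right boundary is not a right cap of the form $\overline j\arcjoin\overline{j+1}$ with label $j$. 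Combining this with \cite[Corollary 3.53]{HivertScott} and the nesting conditions, we aim to deduce that the vertex $n$ is joined to a right vertex, or that the arcs beyond $\overline{i+1}$ are propagating in an order-preserving way. The parity and bound conditions transfer because a vertex $\overline{j+1}$ moves to $\overline j$ while the labels are adjusted exactly as in (c) above. By the computation of $[\iota_{n-1}(d')\circ g_i]$, this $d'$ recovers $d$.

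\textbf{Uniqueness.} We show that the map $d'\mapsto[\iota_{n-1}(d')\circ g_i]$ is injective. By the explicit description, $d'$ is recovered from $d$ by the inverse operations (1)--(3), so two preimages must coincide. An alternative route is to compose on the right with $G_i\circ\cdots\circ G_{n-1}$, which returns $\iota_{n-1}(d')$ up to a nonzero scalar monomial and with no label loss.

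\textbf{Main obstacle.} The hard step is the label bookkeeping in the existence part. The height of the rerouted arc at $n$ must become a valid label for an arc ending at $\overline{n-1}$ in rank $n-1$, and the shifted propagating arcs must satisfy $h\le\min(|a|,|b|)$ with correct parity. The first thing to try is the structural lemma \cite[Corollary 3.53]{HivertScott} together with the defining property of $i$. These should force the labels on arcs right of $\overline{i+1}$ to be at most their new endpoints. The remaining work is a case split according to whether vertex $n$ is joined to a left or a right vertex.
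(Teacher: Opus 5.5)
Your plan, to compute $g_i:=[G_{n-1}\circ\cdots\circ G_i]$ explicitly and invert $d'\mapsto[\iota_{n-1}(d')\circ g_i]$, could give a self-contained proof where the paper only cites Hivert--Scott. However, your computation of $g_i$ is wrong, and the existence step is built on it. Composing directly shows this: in rank $6$, $[G_5\circ G_4\circ G_3]$ consists of $1\arcjoin\overline1$, $2\arcjoin\overline2$, $h(3\arcjoin\overline5)=3$, $h(4\arcjoin\overline6)=4$, $h(5\arcjoin 6)=5$ and $h(\overline3\arcjoin\overline4)=3$. In general the middle strands of $g_i$ are $h(j\arcjoin\overline{j+2})=j$ for $i\le j\le n-2$. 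The vertices $\overline i,\overline{i+1}$ are taken by the cap, and no label is shifted. The figure's $[G_3\circ G_4]$ is an increasing word and has the mirror shape, not this one.

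Hence $[\iota_{n-1}(d')\circ g_i]$ is obtained from $d'$ by three changes: $\overline j\mapsto\overline{j+2}$ for $i\le j\le n-2$, the endpoint $\overline{n-1}$ is replaced by the left vertex $n$, and the cap $h(\overline i\arcjoin\overline{i+1})=i$ is added. Every label of $d'$ is unchanged, because the minimum along each strand is attained on the $d'$ fragment. So your step (2) must shift by two. Shifting $\overline{i+2},\dots,\overline n$ down by one leaves $\overline i$ unmatched and puts two arcs at $\overline{n-1}$. Your parity argument (``labels are adjusted as in (c)'') also has no basis: nothing is adjusted, and a shift by one flips parities. Your alternative uniqueness route is inaccurate too. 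Since $[g_i\circ g_i^*]=G_{n-1}$ after deleting a loop, right multiplication by $g_i^*$ yields $[\iota_{n-1}(d')\circ G_{n-1}]$, not $\iota_{n-1}(d')$; this still determines $d'$.

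With the shift by two, noncrossing is automatic. The relabelling is order preserving for $1<\cdots<n<\overline n<\cdots<\overline1$, and you delete an arc joining adjacent vertices; nesting is preserved for the same reason. So maximality of $i$ plays no role there. The real content, which you leave as ``should force'', is the label bound.

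Parity is automatic. Endpoints move by two, and a propagating arc $a\arcjoin\overline b$ has $a\equiv b\pmod 2$, since the vertices it encloses are matched among themselves. So a propagating arc $a\arcjoin\overline j$ with $a<j$ has $a\le j-2$ and keeps its minimum; arcs whose minimal endpoint does not move are unaffected. What must be shown is this: no arc with an endpoint $\overline j$, $j\ge i+2$, for which $j$ is the minimum of the endpoint values, carries label $j$. This case includes $n\arcjoin\overline n$.

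The missing argument runs as follows. Suppose such an arc has label $j$. If it is the cap $\overline j\arcjoin\overline{j+1}$, or if $j=n$ and it is $h(n\arcjoin\overline n)=n$, stop. Otherwise $\overline{j+1}$ lies strictly inside it. The arc at $\overline{j+1}$ is then nested, so its label is $>j$ and $\le j+1$, hence equal to $j+1$, and $j+1$ is again the minimum of its endpoint values. Iterating, you reach either a cap $h(\overline c\arcjoin\overline{c+1})=c$ with $c>i$, contradicting maximality of $i$, or $h(n\arcjoin\overline n)=n$, which is excluded by hypothesis. This is exactly where both hypotheses enter, and without it the existence part is not proved.
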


\begin{remark}
For generic parameters $X$ and $Y$, the Okada algebras $\{O_n(X,Y)\}_{n\ge1}$
form a tower of finite-dimensional semisimple algebras with
$\dim_K O_n(X,Y)=n!$ \cite[Theorem 2.6]{Okada}, having as basis the Okada arc
diagrams of rank $n$ \cite[Proposition 6.2]{HivertScott}. We regard
$O_{n-1}(X,Y)\subset O_n(X,Y)$ via $\iota_{n-1}$.
\end{remark}
\begin{definition}
Let $K$ be a field. A finite-dimensional $K$-algebra $A$ is called
\emph{split semisimple} if
\[
A\cong\bigoplus_{i=1}^r M_{d_i}(K)
\]
as $K$-algebras, for some positive integers $r,d_1,\ldots,d_r$.
\end{definition}
\begin{proposition}\label{split}
The Okada algebra $O_n(X,Y)$ is split semisimple.
\end{proposition}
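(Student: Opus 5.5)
We need to prove that $O_n(X,Y)$ is split semisimple over $K=K_0(x_1,\ldots,y_1,\ldots)$. The plan is to combine Okada's semisimplicity theorem with the fact that his irreducible representations are already defined over $K$.

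First, semisimplicity is available from \cite[Theorem 2.6]{Okada}: for generic parameters, $O_n(X,Y)$ is semisimple of dimension $n!$. By Wedderburn's theorem we then have
\[
O_n(X,Y)\cong\bigoplus_{j=1}^r M_{m_j}(D_j)
\]
for division algebras $D_j$ over $K$. It remains to show that every $D_j$ equals $K$, which is the same as showing that every simple module $V$ has $\operatorname{End}_{O_n}(V)=K$.

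Second, I would use Okada's explicit construction of the irreducibles. For each $w\in\YF_n$ he gives a module $V_w$ whose basis is indexed by saturated chains in $\YF$ from the bottom to $w$. The generators $E_i$ act on this basis by matrices whose entries are rational functions in the $x_i,y_i$, so they lie in $K$. Hence each $V_w$ is a $K$-representation $\rho_w\colon O_n\to\operatorname{End}_K(V_w)$. Okada shows these modules are absolutely irreducible (they stay irreducible after extending scalars to $\overline K$) and pairwise non-isomorphic. This gives $\operatorname{End}_{O_n}(V_w)=K$, and Schur's lemma gives that the combined map
\[
\rho=\bigoplus_w\rho_w\colon O_n\to\bigoplus_{w\in\YF_n}\operatorname{End}_K(V_w)
\]
is surjective, by the Jacobson density theorem applied to absolutely irreducible, pairwise non-isomorphic modules.

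Third, compare dimensions. By the Robinson–Fomin correspondence for the $1$-differential poset $\YF$, we have $\sum_w(\dim V_w)^2=n!=\dim O_n$. So $\rho$ is a surjection between spaces of equal dimension, hence an isomorphism:
\[
O_n\cong\bigoplus_{w\in\YF_n}M_{\dim V_w}(K),
\]
which is split semisimple.

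The main obstacle is confirming that Okada's matrices really have entries in $K$ and not in an extension. Some seminormal-type constructions involve square roots, which would break rationality. If square roots do appear, I would conjugate by a suitable diagonal change of basis to remove them.

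If that fails, a fallback is to show directly that $O_n\otimes_K\overline K$ has exactly $F_n$ simple modules. Then the number of simple $O_n$-modules over $K$, namely $r$, satisfies $r\le F_n$. On the other hand, $\sum_j m_j^2\dim D_j=n!$, and the absolutely irreducible dimensions of Okada's modules realize $n!$ exactly. These two facts together force $\dim D_j=1$ for every $j$, which again gives split semisimplicity.
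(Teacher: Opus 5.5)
Your main argument is correct, granted the citation. It shares the paper's skeleton: Okada's modules of dimension $F_v$, the Robinson--Fomin identity $\sum_v F_v^2=n!$, and $\dim_K O_n=n!$. Where it differs is the step that rules out nontrivial division algebras.

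You take absolute irreducibility of the $V_w$ from Okada. You then use the density theorem to get $\rho$ surjective onto $\bigoplus_w\operatorname{End}_K(V_w)$, and the dimension count to get injectivity.

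The paper needs a weaker input. It uses only that the $V_v$ are all the simple $K$-modules, with $\dim_K V_v=F_v$. It then applies the Artin--Wedderburn formula $\dim_K O_n=\sum_v(\dim_K V_v)^2/\dim_K D_v$, where $D_v=\operatorname{End}_{O_n}(V_v)$. This gives $n!=\sum_v F_v^2/\dim_K D_v=\sum_v F_v^2$, which forces $\dim_K D_v=1$.

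A simple module is absolutely irreducible exactly when its endomorphism ring is $K$. So your key input is essentially the conclusion itself, and you would have to check that Okada really proves irreducibility over $\overline K$, not just over $K$. The paper shows this need not be assumed: the identity $\sum_v F_v^2=n!$, which you already invoke, produces it.

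Your route has one advantage. It never uses semisimplicity or completeness of Okada's list, and in fact your first step is never used. A surjection onto $\bigoplus_w M_{F_w}(K)$ from a space of the same dimension already gives both.

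Your fallback does not work as stated. The facts $r\le F_n$ and $\sum_j m_j^2\dim_K D_j=n!$, together with the $\overline K$-dimensions squaring to $n!$, allow a component $M_m(D)$ with $D$ a central quaternion division algebra over $K$. After extending scalars such a component becomes a single $M_{2m}(\overline K)$, consistent with all of these. So nothing forces $\dim_K D_j=1$. What is missing is the $K$-dimension of each simple $K$-module, and that is exactly the input the paper's count uses.
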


\begin{proof}
The algebra $O_n(X,Y)$ is semisimple, and its
irreducible modules are indexed by $v\in \YF_n$. Let $V_v$ denote the
irreducible module corresponding to $v$. Then
$\dim_K V_v=F_v$, where $F_v$ is the number of paths in $\YF$ from $\{1\}_1$ to
$v$ by \cite[Theorem 2.6]{Okada}. Moreover, Fomin's generalized Robinson correspondence for differential posets gives a bijection between the symmetric group $S_n$ and pairs of saturated chains in $\YF$ starting from $1$ and ending at the same vertex at rank $n$; see \cite[Example 2.6.8]{Roby1991}. Therefore, we have
$\sum_{v\in\YF_n} F_v^2=n!.$
For each $v\in\YF_n$, let
$D_v=\operatorname{End}_{O_n(X,Y)}(V_v)$. As a consequence of the Artin--Wedderburn theorem,
\[
\dim_K O_n(X,Y)
 =\sum_{v\in\YF_n}
 \frac{(\dim_K V_v)^2}{\dim_K D_v}.
\]
Since $\dim_K O_n(X,Y)=n!$, we obtain
\[
n!
=\sum_{v\in\YF_n}\frac{F_v^2}{\dim_K D_v}
=\sum_{v\in\YF_n}F_v^2.
\]
Since $\dim_K D_v\geq 1$ for every $v$, and $F_v>0$, this equality forces
$\dim_K D_v=1$ for every $v$. Hence $D_v\cong K$ for all $v\in\YF_n$.
Therefore every simple component of $O_n(X,Y)$ is a full matrix algebra
over $K$, and hence $O_n(X,Y)$ is split semisimple.
\end{proof}
\section{Okada half arc diagrams and propagating label sets}

In this section, we briefly recall the definitions and results that will be used frequently throughout the paper. We refer the reader to \cite{HivertScott} for further details.

\begin{definition}
An \emph{Okada half arc diagram} of rank $n$ is obtained by cutting an
Okada arc diagram vertically through its middle. It consists of the
vertices $1,\ldots,n$, together with labelled full arcs joining two
vertices and labelled half-arcs having one free endpoint. A half-arc, that is, an arc with one free endpoint, is called \emph{propagating}.
\end{definition}

\begin{definition}
For an Okada arc diagram $d$, the \emph{bra} $|d \rangle$ is the half
arc diagram obtained by restricting $d$ to its left half. The
\emph{ket} $\langle d|$ is defined by
$
\langle d| =| d^*\rangle,$
where $d^*$ denotes the \emph{mirror} image of $d$, obtained by reflecting $d$ about the vertical axis. The diagram $d$ is called \emph{symmetric} if $d=d^*$.
\end{definition}
Note that $[d_1 \circ d_2]^*=[d_2^* \circ d_1^*]$ for Okada arc diagrams $d_1$ and $d_2$ by \cite[Remark 3.47]{HivertScott}. 
\begin{definition}
The \emph{propagating label set} of an Okada half arc diagram $H$ of rank $n$ is the subset
$\PLab(H)\subseteq [n]$
consisting of the height labels of the propagating half-arcs of $H$. By \cite[Lemma 4.4]{HivertScott}, $\PLab(\langle d|)=\PLab(|d\rangle)$ for an Okada arc diagram $d$. Therefore we can define
\[
\PLab(d):=\PLab(\langle d|)=\PLab(|d\rangle).
\]
\end{definition}
By \cite[Proposition 4.7]{HivertScott}, the propagating label set of an Okada arc diagram $d$ of rank $n$ is a Fibonacci set of rank $n$.
The following partial order on the set of Fibonacci sets of rank $n$ and Proposition \ref{structure} allow us to describe the structure of the Okada monoid.
\begin{definition} 
    For two Fibonacci sets $S=\{s_1< \cdots< s_k\}$ and $T=\{t_1< \cdots< t_\ell\}$ of rank $n$, we say that $T$ dominates $S$ and write $S \preccurlyeq T$ if $k \leq \ell$ and $s_{k-i} \leq t_{\ell-i}, ~i=0, \ldots, k-1;$ and $S \prec T$ if $S \preccurlyeq T$ and $S \neq T$.
\end{definition} 
Under the above order $\preccurlyeq$, $\YF_n$ becomes a ranked distributive lattice; see \cite[Proposition 5.2]{HivertScott}.
\begin{proposition}[{\cite[Lemma 5.11 and Corollary 5.12]{HivertScott}}]\label{structure}
    Let $e$ and $f$ be two Okada arc diagrams of rank $n$. Then either $| [e \circ f]\rangle = | e \rangle$ and thus $\PLab([e \circ f])= \PLab(e)$ or $\PLab([e \circ f]) \prec \PLab(e)$. Therefore, $\PLab([e \circ f ]) \preccurlyeq \inf (\PLab(e), \PLab(f)).$
\end{proposition}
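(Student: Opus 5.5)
We prove the dichotomy first and then deduce the inequality with $\inf$.

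\textbf{Step 1: what composition does to the left half of $e$.} Set $d=[e\circ f]$. Every non-propagating arc of $e$ joining two left vertices $a\arcjoin b$ survives unchanged in $d$, with the same height label. Each propagating half-arc of $|e\rangle$ is continued through $f$, and only two things can happen to it.
\begin{enumerate}
\item[(a)] It still ends on the right boundary of $d$. Its label in $d$ is the minimum of the labels of its fragments, so it is $\le$ its label in $e$.
\item[(b)] It returns to the left boundary and is joined to another propagating half-arc of $e$. This creates a new left arc of $d$.
\end{enumerate}
Consequently $|d\rangle$ is obtained from $|e\rangle$ by two operations: pairing off some propagating half-arcs, and possibly lowering the labels of the remaining ones. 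In particular, if no pairing occurs and no label drops, then $|d\rangle=|e\rangle$, and hence $\PLab(d)=\PLab(e)$.

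\textbf{Step 2: comparing the label sets.} Let the propagating half-arcs of $e$ sit at left vertices $p_1<\dots<p_k$. I will use the fact (the step I expect to need the most care, via the nesting and parity conditions in the definition of Okada arc diagrams, as in \cite{HivertScott}) that their labels increase with position, so that $s_j=h(p_j)$ where $\PLab(e)=\{s_1<\dots<s_k\}$.

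The surviving strands of $d$ are the propagating arcs of $d$, at positions $p_{\sigma(1)}<\dots<p_{\sigma(k')}$, where $\sigma$ is order preserving. Here $k'\le k$ and $k-k'$ is even. By the same monotonicity applied to $d$, the labels $t_1<\dots<t_{k'}$ of $\PLab(d)$ are the labels of these strands in order.

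Counting from the top, $\sigma(k'-i)\le k-i$. Combining this with (a) and monotonicity gives
\[
t_{k'-i}\le h_e\bigl(p_{\sigma(k'-i)}\bigr)=s_{\sigma(k'-i)}\le s_{k-i}.
\]
Hence $\PLab(d)\preccurlyeq\PLab(e)$.

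Now suppose equality holds. Then $k'=k$, so no pairing occurred, and $\sigma$ is the identity. Every inequality in the display above is then an equality, so no label dropped. By Step 1 this gives $|d\rangle=|e\rangle$. This proves the dichotomy.

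\textbf{Step 3: the bound by $\inf$.} By \cite[Remark 3.47]{HivertScott}, $[e\circ f]^*=[f^*\circ e^*]$. Since $\PLab(\langle d|)=\PLab(|d\rangle)$, we have $\PLab(x^*)=\PLab(x)$ for every diagram $x$. Applying the dichotomy to the pair $(f^*,e^*)$ therefore gives
\[
\PLab([e\circ f])=\PLab([f^*\circ e^*])\preccurlyeq\PLab(f^*)=\PLab(f).
\]
Together with Step 2, $\PLab([e\circ f])$ is a common lower bound of $\PLab(e)$ and $\PLab(f)$. Since $(\YF_n,\preccurlyeq)$ is a lattice \cite[Proposition 5.2]{HivertScott}, it follows that $\PLab([e\circ f])\preccurlyeq\inf(\PLab(e),\PLab(f))$.
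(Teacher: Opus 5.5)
Your proof is correct. The paper proves nothing here: it imports the statement from Hivert--Scott (Lemma~5.11 and Corollary~5.12), and that buys brevity at the cost of treating the result as a black box. Your strand-tracing argument instead derives it directly from the definitions. It needs only these ingredients:
\begin{enumerate}
\item the composition rule, under which labels are minima over fragments;
\item the nesting condition;
\item closure of $\mathcal D_n$ under $(d_1,d_2)\mapsto[d_1\circ d_2]$ (Hivert--Scott, Theorem~3.43), which you need in order to apply monotonicity to $d=[e\circ f]$;
\item the mirror identity $[e\circ f]^*=[f^*\circ e^*]$;
\item the lattice property of $(\YF_n,\preccurlyeq)$.
\end{enumerate}
The equality analysis is sound. $k'=k$ forces $\sigma$ to be the identity, and then $t_j\le h_e(p_j)=s_j$ with $t_j=s_j$ rules out any label drop. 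So a pure label drop without pairing, as in $\PLab([G_1\circ G_2])=\{1\}\prec\{3\}$ in rank~$3$, is correctly sent to the strict case.

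The step you flagged as delicate is immediate and uses no parity. Let $p_i<p_j$ be left endpoints of propagating arcs $p_i\arcjoin\overline{q_i}$ and $p_j\arcjoin\overline{q_j}$. Both $\overline{q_i}$ and $\overline{q_j}$ exceed $p_j$ in the total order, so noncrossing forces $p_i<p_j<\overline{q_j}<\overline{q_i}$. The second arc is therefore nested in the first, and $h(p_j)>h(p_i)$.

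State the strictness of this inequality explicitly. It is what guarantees $|\PLab(e)|=k$ and $|\PLab(d)|=k'$, and your comparison of $t_{k'-i}$ with $s_{k-i}$ uses both.
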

\begin{lemma}\label{halves}
Let $t$ be a symmetric Okada arc diagram of rank $n$ with $\PLab(t)=S$ and $d\in\mathcal D_n$. Then
$\PLab([d\circ t\circ d^*])=S$ if and only if $\PLab([d\circ t])=S$, and in
that case $\bigl|[d\circ t\circ d^*]\bigr\rangle=\bigl|[d\circ t]\bigr\rangle$.
\end{lemma}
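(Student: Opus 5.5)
Write $u=[d\circ t]$, so that by associativity of the monoid product $[d\circ t\circ d^*]=[u\circ d^*]$. The whole argument is two applications of Proposition~\ref{structure}, once to the product $[d\circ t]$ and once to the product $[u\circ d^*]$. We also use that $\PLab(u)$ is unchanged under mirroring, so that $\PLab(d^*)$-type quantities can be read from either half of the diagram.

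\emph{Forward direction.} Assume $\PLab([d\circ t\circ d^*])=S$.
\begin{itemize}
\item Proposition~\ref{structure} applied to $e=u$, $f=d^*$ gives $\PLab([u\circ d^*])\preccurlyeq \PLab(u)$.
\item The same proposition applied to $e=d$, $f=t$ gives $\PLab(u)\preccurlyeq\inf(\PLab(d),\PLab(t))\preccurlyeq \PLab(t)=S$.
\item Chaining these, $S\preccurlyeq \PLab(u)\preccurlyeq S$. Since $\preccurlyeq$ is a partial order on $\YF_n$, antisymmetry gives $\PLab(u)=S$.
\end{itemize}

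\emph{Reverse direction.} Assume $\PLab(u)=S$. We must show $\PLab([u\circ d^*])=S$. By Proposition~\ref{structure} applied to $e=u$, $f=d^*$, it suffices to rule out the alternative $\PLab([u\circ d^*])\prec S$; we then automatically get $|[u\circ d^*]\rangle=|u\rangle$, which is the claimed equality of bras.
\begin{itemize}
\item Use the mirror: $[u\circ d^*]=[d\circ t\circ d^*]$ is symmetric, since $[d\circ t\circ d^*]^*=[d\circ t^*\circ d^*]$ by the mirror rule $[d_1\circ d_2]^*=[d_2^*\circ d_1^*]$ and $t^*=t$.
\item We have $u^*=[t\circ d^*]$, and $\PLab(u^*)=\PLab(u)=S$ because $\PLab$ of a diagram equals $\PLab$ of its bra and of its ket.
\item Apply Proposition~\ref{structure} to the pair $(d,\,u^*)$: since $[d\circ u^*]=[d\circ t\circ d^*]$, its $\PLab$ is either $\PLab(d)$ with the bra preserved, or strictly below $\PLab(d)$. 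This alone does not yet give $S$.
\end{itemize}

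\emph{Main obstacle.} The real content is exactly this reverse implication. Proposition~\ref{structure} only gives upper bounds, so we need a lower bound on $\PLab([u\circ d^*])$. The plan is a direct diagrammatic argument.
\begin{itemize}
\item Since $\PLab(u)=S=\PLab(t)$, the propagating arcs of $u$ carry the same labels as those of $t$.
\item Moreover, the right half of $u$ coincides with the right half of $t$. Indeed, Proposition~\ref{structure} applied to the mirrored product $u^*=[t\circ d^*]$ yields either $|u^*\rangle=|t\rangle$ or $\PLab(u^*)\prec S$; the second is excluded, so $\langle u|=|t\rangle$.
\item Next we glue $u$ to $d^*$. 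The right half of $u$ is $\langle u|=|t\rangle$, and the left half of $d^*$ is $|d^*\rangle=\langle d|$.
\item Note that $u=[d\circ t]$ was formed by gluing $\langle d|$ against $|t\rangle$ without losing any propagating label. In $[u\circ d^*]$ the same pair of half diagrams, $|t\rangle$ against $\langle d|$, is glued, only mirrored.
\item Because the height labels of the resulting strands are minima over their fragments, they are determined locally by this pair. Hence the strands through the middle, and their minimum labels, are the mirror images of those in $[d\circ t]$. Therefore no propagating label drops, and $\PLab([u\circ d^*])=S$.
\end{itemize}

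I expect that making this mirror-gluing argument rigorous, using the half-diagram gluing description behind \cite[Lemma 5.11]{HivertScott}, is the step that needs the most care.
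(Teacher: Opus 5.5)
Your proof is correct, and its skeleton matches the paper's. The forward direction is the same chain of inequalities from Proposition~\ref{structure}. In the reverse direction, both you and the paper first extract $|[t\circ d^*]\rangle=|t\rangle$ (your $\langle u|=|t\rangle$) from the mirror symmetry and Proposition~\ref{structure}. Both also finish by applying Proposition~\ref{structure} to $(u,d^*)$ to upgrade $\PLab([d\circ t\circ d^*])=S$ to equality of bras.

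The difference is how that intermediate fact is transferred to $[d\circ t\circ d^*]$. The paper brackets the triple product as $[d\circ[t\circ d^*]]$ and uses left locality: the bra of $[d\circ e]$ depends only on $d$ and $|e\rangle$. This is the same fact that makes the cell-module action well defined, cf.\ \cite[Remark 6.4]{HivertScott}. So replacing $|[t\circ d^*]\rangle$ by $|t\rangle$ gives $\PLab([d\circ t\circ d^*])=\PLab([d\circ t])$ at once.

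You instead bracket it as $[u\circ d^*]$. You observe that its middle gluing (right half of $t$ against the mirror of the right half of $d$) is the mirror image of the middle gluing in $[d\circ t]$. Every propagating strand consists of one propagating arc on each side plus middle fragments, and its label is a minimum of labels all recorded in the two half diagrams. Hence the propagating labels agree. This argument is sound and essentially complete as sketched, and it shows explicitly that the through-strands of $[d\circ t\circ d^*]$ are mirrors of those of $[d\circ t]$. The paper's route is shorter because it delegates the diagram check to a standard locality statement.

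In your third bullet you already had the pair $(d,u^*)$ with $|u^*\rangle=|t\rangle$ and $[d\circ u^*]=[d\circ t\circ d^*]$. Left locality closes the argument right there, so the step you flagged as the main obstacle is lighter than you expected.
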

\begin{proof}
   If $\PLab([ d \circ t \circ d^*])=S$, then by Proposition~\ref{structure}, we have $\PLab([d \circ t])=S$. For the other direction, suppose that $\PLab([d \circ t])=S$. Since $t =t^*$, and taking the mirror image does not change the propagating label set, $\PLab([t \circ d^*])=S$. Then by Proposition \ref{structure}, $| [t \circ d^*] \rangle = | t \rangle$. Since they have the same left half, composing on the left with $d$ does not change the propagating label set of the new diagrams, i.e., $\PLab([d \circ t \circ d^*])=\PLab([d \circ t])$. Again by Proposition \ref{structure}, $| [d \circ t \circ d^*] \rangle = | [d \circ t] \rangle$.
\end{proof}
Now we record some results concerning the monomial $\lambda(d,t)$ arising from the product $dt$ of two Okada arc diagrams $d$ and $t$ of rank $n$. For an Okada arc diagram $d$ of rank $n$, let $\mathrm{Length}(d)$ denote its length in the sense of \cite[Proposition~5.27]{HivertScott}; by that proposition, 
\[ 
\mathrm{Length}(d)= \frac{n(n+1)}{2}- \sum \ell,
\] where the sum runs over all arcs $h(i \arcjoin j)=\ell$ of $d$.
\begin{lemma}\label{properties of lambda}
\begin{enumerate}
\item Let $a,b,c\in\mathcal D_n$ with $|b\rangle=|c\rangle$. Then
$\lambda(a,b)=\lambda(a,c)$.
\item For $a\in\mathcal D_{n-2}$, regarded as an element of $\mathcal D_n$,
$\mathrm{Length}([a\circ G_{n-1}])=\mathrm{Length}(a)+\mathrm{Length}(G_{n-1})$,
and $\lambda(a,G_{n-1})=1$.
\end{enumerate}
\end{lemma}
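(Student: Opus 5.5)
Both parts come down to how Hivert and Scott define the scalar $\lambda(d_1,d_2)$ in \cite[Corollary 6.15]{HivertScott}. I will use the following description of it. When $d_1$ and $d_2$ are concatenated, every closed loop and every arc of $d_1\circ d_2$ receives the minimum height label among its fragments. The monomial $\lambda(d_1,d_2)$ is a product of factors $x_\ell$ and $y_\ell$, and these factors depend only on how the arcs of $d_1$ meet the arcs of $d_2$ at the middle boundary. Equivalently, $\lambda$ is determined by the length defect
\[
\mathrm{Length}(d_1)+\mathrm{Length}(d_2)-\mathrm{Length}([d_1\circ d_2])
\]
together with the labels of the closed loops. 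It is this identification of $\lambda$ through length and loops that I will rely on.

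For part (1), I argue that the concatenation $a\circ b$ only uses the right boundary of $a$ and the left half of $b$. The middle boundary is where $\overline i$ of $a$ is glued to $i$ of $b$, and the left half of $b$ is its bra $|b\rangle$. So the closed loops of $a\circ b$ and their labels are determined by the ket $\langle a|$ together with $|b\rangle$. The same holds for every arc of $a\circ b$ that has at least one endpoint on the left boundary of $a$. The arcs passing through the middle boundary cannot see the right half of $b$, except through which free ends of $|b\rangle$ they reach, and labelling takes a minimum along the path. Hence, if $|b\rangle=|c\rangle$, the two concatenations $a\circ b$ and $a\circ c$ have identical loops and identical middle interactions. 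The factors recorded in \cite[Corollary 6.15]{HivertScott} are read off from exactly this data, so $\lambda(a,b)=\lambda(a,c)$. The step needing care is checking that Hivert--Scott's formula for $\lambda$ really is a function of this half-diagram data only. If it is phrased via lengths instead, I would use the formula $\mathrm{Length}=n(n+1)/2-\sum\ell$ to see that the right halves of $b$ and $c$ contribute equally to $\mathrm{Length}(b)$ versus $\mathrm{Length}([a\circ b])$, and likewise for $c$, so the defect agrees.

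For part (2), take $a\in\mathcal D_{n-2}$ regarded in $\mathcal D_n$, so $a$ contains the propagating arcs $h(n-1\arcjoin\overline{n-1})=n-1$ and $h(n\arcjoin\overline n)=n$. In $a\circ G_{n-1}$ the arcs of $a$ on $1,\dots,n-2$ are continued through the identity strands of $G_{n-1}$, which have labels $j$ at position $j$. By the height bound $h\le\min(|a|,|b|)$, those strands never lower a label. The two top strands of $a$ meet the cup $\overline{n-1}\arcjoin\overline n$ of $G_{n-1}$, which has label $n-1$, and form one left arc $n-1\arcjoin n$ with label $\min(n-1,n,n-1)=n-1$. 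The cap $\overline{n-1}\arcjoin\overline n$ passes through unchanged. No closed loops are formed.

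Summing labels, the resulting diagram has label sum $\Sigma_a+(n-1)+(n-1)$, where $\Sigma_a$ is the label sum of $a$ on the first $n-2$ strands. The label sum of $a$ in rank $n$ is $\Sigma_a+(n-1)+n$, and that of $G_{n-1}$ is $\binom{n-1}{2}+2(n-1)$, which I will take as given from its definition. The length identity then follows by direct arithmetic with $\mathrm{Length}=n(n+1)/2-\sum\ell$: both sides equal $n(n+1)/2-\Sigma_a-2(n-1)+\ldots$, and the constants cancel. Finally, since the product creates no loops and the lengths add, i.e.\ the product is reduced, \cite[Corollary 6.15]{HivertScott} gives $\lambda(a,G_{n-1})=1$. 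The main obstacle is again pinning down that precise characterization of when $\lambda=1$ (additivity of length and no loops) in Hivert--Scott.
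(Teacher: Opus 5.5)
Your part (2) is the paper's argument. You identify $[a\circ G_{n-1}]$ as $a$ with its two top propagating strands replaced by $h(n-1\arcjoin n)=n-1$ and $h(\overline{n-1}\arcjoin\overline n)=n-1$. You then compare label sums and conclude $\lambda(a,G_{n-1})=1$ from Hivert--Scott's result that a length-additive product has trivial coefficient. The paper cites \cite[Proposition 6.2]{HivertScott} for this, and your extra ``no loops'' hypothesis is not needed. Your use of $h\le\min(|a|,|b|)$ to see that the labels on the first $n-2$ strands do not drop is a detail the paper leaves implicit.

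There are two slips. The arc of $G_{n-1}$ that the top strands of $a$ meet is its left arc $n-1\arcjoin n$, not $\overline{n-1}\arcjoin\overline n$. Also, both sides of the length identity equal exactly $n(n+1)/2-\Sigma_a-2(n-1)$, so the ``$+\ldots$'' should be dropped.

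For part (1) the paper only cites \cite[Remark 6.4]{HivertScott}. That remark says exactly that $\lambda(d,t)$ depends on $t$ only through $|t\rangle$. Your guiding idea, that the coefficient is read off from how $\langle a|$ meets $|b\rangle$, is the content of that remark. However, the description of $\lambda$ you say you will rely on is false, and both of your supporting arguments lean on it.

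The scalar is \emph{not} determined by the length defect together with the loop labels. In $O_4$ one has $E_2E_1E_2=y_1E_2$ and $E_3E_2E_3=y_2E_3$, that is, $\lambda([G_2\circ G_1],G_2)=y_1$ and $\lambda([G_3\circ G_2],G_3)=y_2$. Both concatenations have length defect $2$ and no closed loops. So matching loops tells you nothing about the $y_\ell$ factors, which arise without any loop. Likewise, your fallback via $\mathrm{Length}=n(n+1)/2-\sum\ell$ at best shows that $\lambda(a,b)$ and $\lambda(a,c)$ have the same total degree (with $\deg x_\ell=1$, $\deg y_\ell=2$), not that they are the same monomial.

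To close part (1) you must either quote Remark 6.4 directly, as the paper does, or show from Hivert--Scott's explicit rule that the $y$-factors, like the loop factors, are functions of $\langle a|$ and $|b\rangle$ alone.
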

\begin{proof}
    Part (1) follows from \cite[Remark 6.4]{HivertScott}. We now prove Part (2).
By the definition of $G_{n-1}$, $\mathrm{Length}(G_{n-1})=1$. The diagram $[a \circ G_{n-1}]$ is obtained from $a$ by replacing the propagating arcs $h( n-1 \arcjoin \overline{n-1})=n-1$ and $h(n \arcjoin \overline{n})=n$ by the non-propagating arcs $h(n-1 \arcjoin n)=n-1$ and $h(\overline{n-1} \arcjoin \overline{n})=n-1$ with all other arcs and their height labels unchanged.  Let $L$ be the sum of the height labels of the arcs of $a$ as a diagram of rank $n-2$, and set $k:= \frac{n(n+1)}{2}-L$. Then 
\begin{align*}
\mathrm{Length}(a)
  &= k-\bigl(n+(n-1)\bigr) = k-2n+1,\\
\mathrm{Length}\bigl([a\circ G_{n-1}]\bigr)
  &= k-2(n-1) = k-2n+2
   = \mathrm{Length}(a)+\mathrm{Length}(G_{n-1}).
\end{align*}
Since this product is length-additive, \cite[Proposition 6.2]{HivertScott} gives
$\lambda(a,G_{n-1})=1$.
\end{proof}

\section{Construction of the Gelfand model}
In this section we construct a representation of the Okada algebra $O_n(X,Y)$ on the space of symmetric Okada arc diagrams using a conjugation-type action and state our main theorem giving a Gelfand model for the Okada algebra $O_n(X,Y)$ for generic parameters $X$ and $Y$. Unless otherwise specified, throughout this paper $O_n$ denotes the Okada algebra $O_n(X,Y)$ with $X,Y$ generic. 
\begin{definition}
Let $n$ be a positive integer. Define the set $I_n$ of symmetric Okada arc
diagrams of rank $n$ by
\[
I_n=\{t\in\mathcal{D}_n : t=t^*\},
\]
and set $M_n=KI_n$.
\end{definition}
Now we define the action of Okada arc diagrams of rank $n$ on $M_n$ and extend it linearly to $O_n(X,Y)$. Let $d$ be an Okada arc diagram of rank $n$ and let $t\in I_n$. Define

\begin{equation}\label{action}
 d\cdot t=
\begin{cases}
\displaystyle \lambda(d,t)[d\circ t\circ d^*],
& \text{if } \PLab([d\circ t\circ d^*])=\PLab(t),\\[6pt]
0,
& \text{otherwise}.
\end{cases}
\end{equation}
Recall that $[d\circ t\circ d^*]$ denotes the Okada arc diagram obtained after composing, deleting loops, labelling the remaining arcs by their heights, and taking isotopy class. Note that $[d \circ t \circ d^*]$ is symmetric since $t$ is symmetric. The scalar $\lambda(d,t)$ denotes the coefficient of $[d \circ t ]$ in the product $dt$.
 
Recall that a \emph{Gelfand model} for a finite-dimensional split semisimple algebra $A$ is an $A$-module in which every irreducible $A$-module occurs exactly once, up to isomorphism. Now we state the main theorem of this paper which gives a Gelfand model for the Okada algebras.
\begin{theorem}\label{main}
Let $M_n=KI_n$ be as above. Then the following hold.
\begin{enumerate}
\item Equation \eqref{action} defines an $O_n(X,Y)$-module structure on $M_n$.
\item The module $M_n$ is a Gelfand model for the Okada algebra $O_n(X,Y)$.
\end{enumerate}
\end{theorem}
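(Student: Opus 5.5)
Part (1) and part (2) need different tools: a direct diagrammatic check for the module axioms, and an induction through the Jones basic construction for the Gelfand property.

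\textbf{Part (1).} We must check that $(d_1d_2)\cdot t=d_1\cdot(d_2\cdot t)$ for $d_1,d_2\in\mathcal D_n$ and $t\in I_n$, and that the identity acts trivially. Write $S=\PLab(t)$.

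First we compare supports. By Lemma \ref{halves}, $d_2\cdot t\neq 0$ iff $\PLab([d_2\circ t])=S$. In that case $\bigl|[d_2\circ t\circ d_2^*]\bigr\rangle=\bigl|[d_2\circ t]\bigr\rangle$. Applying Lemma \ref{halves} again, $d_1\cdot(d_2\cdot t)\ne0$ iff $\PLab([d_1\circ d_2\circ t])=S$. By Proposition \ref{structure}, this forces $\PLab([d_2\circ t])=S$ as well. So both sides are nonzero exactly when $\PLab([[d_1\circ d_2]\circ t])=S$. The diagrams then agree, since $[d_1\circ d_2]^*=[d_2^*\circ d_1^*]$ and the monoid product is associative.

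Next we compare scalars. We need
\[
\lambda(d_1,d_2)\,\lambda([d_1\circ d_2],t)=\lambda(d_2,t)\,\lambda(d_1,[d_2\circ t\circ d_2^*]).
\]
Lemma \ref{properties of lambda}(1) says $\lambda$ depends only on the left half of the second argument. Since $|[d_2\circ t\circ d_2^*]\rangle=|[d_2\circ t]\rangle$, the right-hand side equals $\lambda(d_2,t)\lambda(d_1,[d_2\circ t])$. Both sides are now the coefficient of $[d_1\circ d_2\circ t]$ in the associative product $d_1d_2t$, so they are equal.

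\textbf{Part (2).} Since $O_n$ is split semisimple (Proposition \ref{split}), it suffices to show that the character of $M_n$ equals $\sum_{v\in\YF_n}\chi_v$. The argument is an induction on $n$ through the Jones basic construction.

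We decompose $M_n=\bigoplus_{S\in\YF_n}M_n^S$, where $M_n^S$ is spanned by the $t\in I_n$ with $\PLab(t)=S$. The action preserves $\PLab$, so this decomposition is $O_n$-stable.

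Following \cite{HalversonRam}, we take the idempotent $e_n=x_{n-1}^{-1}G_{n-1}$ and the ideal $J_n=O_ne_nO_n$. By Proposition \ref{J_n}, $J_n$ is spanned by the diagrams not containing $h(n\arcjoin\overline n)=n$, and the quotient $O_n/J_n\cong O_{n-1}$ via $\iota_{n-1}$. We also need $e_nO_ne_n\cong O_{n-2}e_n$; Lemma \ref{properties of lambda}(2) supplies the required normalization of scalars. Together these give the recursion $\Lambda_{O_n}=\Lambda_{O_{n-2}}\sqcup\Lambda_{O_{n-1}}$.

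The sets $S$ then split into two cases according to whether $n\in S$:
\begin{enumerate}
\item If $n\in S$, every $t\in I_n$ with $\PLab(t)=S$ contains the arc $h(n\arcjoin\overline n)=n$, and $J_n$ annihilates $M_n^S$. Hence $M_n^S\cong M_{n-1}^{S\setminus\{n\}}$ as a module over $O_{n-1}=O_n/J_n$.
\item If $n\notin S$, the module is generated by $J_n$, and $e_nM_n^S$ identifies with $M_{n-2}^{S'}$ as an $e_nO_ne_n\cong O_{n-2}$-module, where $S'\in\YF_{n-2}$ is the corresponding Fibonacci set.
\end{enumerate}

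The intended bijections are $S\mapsto S\setminus\{n\}$ from $\{S\in\YF_n: n\in S\}$ onto $\YF_{n-1}$, and $S\mapsto S'$ from $\{S\in\YF_n: n\notin S\}$ onto $\YF_{n-2}$. These match the two ways a rank-$n$ word is built, by prepending $1$ or $2$.

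By induction, each $M_{n-1}^{T}$ and $M_{n-2}^{T'}$ is irreducible. The Jones basic construction then makes each $M_n^S$ irreducible, since $e_n(-)$ and inflation from $O_n/J_n$ induce bijections on irreducibles. Distinct $S$ give non-isomorphic modules, since they come from distinct irreducibles of $O_{n-1}$ or $O_{n-2}$ in disjoint blocks. Counting, there are $|\YF_n|$ such modules, which is exactly the number of irreducibles.

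\textbf{Main obstacle.} The hardest step is the case $n\notin S$. We must show that $e_nM_n^S\cong M_{n-2}^{S'}$ exactly, with the correct scalars under the isomorphism $e_nO_ne_n\cong O_{n-2}$. We must also show that $M_n^S$ has no component annihilated by $J_n$, i.e. $M_n^S=J_nM_n^S$. For the first point, I would construct the map on basis elements: send $t'\in I_{n-2}$ to a suitable symmetric diagram built from $G_{n-1}$, $\iota(t')$ and the chain $G_{n-1}\cdots G_i$. Then I would compare the characters of $e_nae_n$ using Lemma \ref{properties of lambda}. For the second point, I would show that every such $t$ satisfies $t=c\,d\cdot t_0$ with $d\in J_n$ and $c\neq0$, via Proposition \ref{J_n}.
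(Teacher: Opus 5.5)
Your Part (1) is essentially the paper's argument: supports are compared via Lemma~\ref{halves} and Proposition~\ref{structure}, and scalars via associativity of $O_n$ and Lemma~\ref{properties of lambda}(1).

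Your Part (2) is correct but follows a genuinely different route.

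\textbf{The paper's route.} The paper works only with characters. It groups the summands into $M_n^r=\bigoplus_{\max S=r}M_n^S$ and checks the trace relations (A), comparing only diagonal entries of $a'e_n$ and of elements of $C_n$. It proves (B) through the dominance order, namely $\operatorname{Hom}(M_n^S,M_n^T)\neq0\Rightarrow T\preccurlyeq S$. It then invokes \cite[Theorem 4.1]{HalversonReeks} as a black box. Irreducibility of each $M_n^S$ is deduced only afterwards, by counting.

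\textbf{Your route.} You induct on the stronger statement that the $M_m^S$ are irreducible and pairwise non-isomorphic. You then use the module-level correspondences directly: inflation from $O_n/J_n\cong O_{n-1}$, and $W\mapsto e_nW$ between irreducibles not killed by $e_n$ and irreducibles of $e_nO_ne_n\cong O_{n-2}$. This frees you from the Halverson--Reeks criterion, whose extension to split semisimple algebras over arbitrary fields the paper only asserts. It also frees you from Proposition~\ref{B}: non-isomorphism falls out of the induction together with the dichotomy ``$J_n$ kills $M_n^S$ iff $n\in S$''. As a bonus you get irreducibility of each $M_n^S$ and its place in the recursion \eqref{recursion} in one stroke. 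The price is that you need an honest module isomorphism rather than a trace identity.

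\textbf{Your two flagged obstacles are easier than you suggest.}
\begin{enumerate}
\item If $n\notin S$, parity forces $\max S\le n-2$, so $S'=S$. The right map is simply $t'\mapsto[t'\circ G_{n-1}]$, and no chain $G_{n-1}\cdots G_i$ is needed.
\begin{itemize}
\item $e_n$ fixes each such diagram, since $G_{n-1}\,t'G_{n-1}=x_{n-1}t'G_{n-1}$.
\item Every $e_n\cdot t$ is a multiple of one of them, because $e_nO_ne_n=O_{n-2}e_n$ and symmetry is preserved.
\item The paper's computation in the proof of Proposition~\ref{A} gives $a'e_n\cdot[t'\circ G_{n-1}]=\lambda(a',t')\,[a'\circ t'\circ a'^*\circ G_{n-1}]$ in general, not just on the diagonal. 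The vanishing conditions also match, since $\PLab([a'\circ t'\circ a'^*\circ G_{n-1}])=\PLab([a'\circ t'\circ a'^*])$.
\end{itemize}
Together these give $e_nM_n^S\cong M_{n-2}^S$ exactly.
\item $M_n^S=J_nM_n^S$ needs no appeal to Proposition~\ref{J_n}. Each $t\in I_n^S$ already lies in $J_n$ (as $n\notin\PLab(t)$) and satisfies $t\cdot t=\alpha_t t$ with $\alpha_t\neq0$. Hence $t=\alpha_t^{-1}\,t\cdot t\in J_nM_n^S$.
\end{enumerate}
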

\begin{proof}[Proof of part (1)]
 It is obvious that $\mathrm{id}_n \cdot t=t,~ t \in I_n.$
    We need to show that $(d_1d_2)\cdot t=d_1\cdot(d_2\cdot t)$ for $t \in I_n,~ d_1,d_2 \in \mathcal{D}_n.$ Let $\PLab(t)=S$. Assume $\PLab([d_2\circ t\circ d_2^*])\neq S$. Then by Proposition \ref{structure}, $\PLab([d_2\circ t\circ d_2^*])\prec \PLab(t)=S$. Hence $d_2\cdot t=0$, and therefore $d_1\cdot(d_2\cdot t)=0$.
In this case, $\PLab([[d_1\circ d_2]\circ t\circ [d_1\circ d_2]^*])\preccurlyeq \PLab([d_2\circ t\circ d_2^*])\prec S$. Thus $(d_1d_2)\cdot t=0$. Hence the first zero case is done.

Now suppose $\PLab([d_2\circ t\circ d_2^*])=S$ and $\PLab([[d_1\circ d_2]\circ t\circ [d_1\circ d_2]^*])\neq S$. Then $(d_1d_2)\cdot t=0$.
At the same time, $d_2\cdot t=\lambda(d_2,t)[d_2\circ t\circ d_2^*]$. Therefore
\[
d_1\cdot(d_2\cdot t)
=
\lambda(d_2,t)
d_1\cdot [d_2\circ t\circ d_2^*].
\]
But $\PLab([d_1\circ [d_2\circ t\circ d_2^*]\circ d_1^*])=\PLab([[d_1\circ d_2]\circ t\circ [d_1\circ d_2]^*])\neq S=\PLab([d_2\circ t\circ d_2^*])$. Hence $d_1\cdot [d_2\circ t\circ d_2^*]=0$. Thus $d_1\cdot(d_2\cdot t)=0$.
Therefore the zero case is done.

Now we consider the nonzero case. Assume that $\PLab([d_2\circ t\circ d_2^*])=S$ and $\PLab([[d_1\circ d_2]\circ t\circ [d_1 \circ d_2]^*])=S$.
Let $s=[d_2\circ t\circ d_2^*]$ and let $d_3=[d_1\circ d_2]$. Then $s\in I_n$ with $\PLab(s)=S$ by assumption. Therefore,
\begin{equation}\label{1}
d_1\cdot(d_2\cdot t)
=
\lambda(d_2,t) \lambda(d_1,s)
[d_1\circ d_2\circ t\circ d_2^*\circ d_1^*].
\end{equation}
On the other hand, since $d_1d_2=\lambda(d_1,d_2)d_3$, we get
\begin{equation}\label{2}
(d_1d_2)\cdot t
=
\lambda(d_1,d_2) \lambda(d_3,t)
[d_3\circ t\circ d_3^*].
\end{equation}
Since $d_3=[d_1\circ d_2]$, we have $[d_3\circ t\circ d_3^*]=[d_1\circ d_2\circ t\circ d_2^*\circ d_1^*]$. Hence
\[
(d_1d_2)\cdot t
=
\lambda(d_1,d_2) \lambda(d_3,t)
[d_1\circ d_2\circ t\circ d_2^*\circ d_1^*].
\]
Since the Okada algebra $O_n(X,Y)$ is associative, we have $d_1(d_2t)=(d_1d_2)t$ implying $\lambda(d_1,[d_2 \circ t])\lambda(d_2,t) = \lambda(d_1,d_2)\lambda(d_3,t)$. By Lemma \ref{halves}, $| [d_2 \circ t] \rangle = | [d_2 \circ t \circ d_2^*] \rangle$, so $\lambda(d_1, [d_2 \circ t])= \lambda(d_1, [d_2 \circ t \circ d_2^*])$ by Lemma \ref{properties of lambda}(1). Therefore the scalar factors on both sides of Equations \eqref{1} and \eqref{2} are equal.
Thus $d_1\cdot(d_2\cdot t)=(d_1d_2)\cdot t$. Hence the nonzero case is done.
\end{proof}
Part (2) of Theorem \ref{main} is proved in Section \ref{gelfand section}. 
\begin{remark}
Note that Gelfand models for the symmetric group $S_n$ have been constructed on the space of involutions in $S_n$ in \cite{KodiyalamVerma2004} and \cite{AdinPostnikovRoichman2008}. The group algebra of $S_n$ is realized as a diagram algebra and a Gelfand model on the space of symmetric diagrams is visualized in terms of a conjugation action in \cite[Section 2 and Remarks 4.2 and 4.4]{HalversonReeks}. Thanks to \cite[Proposition~4.13, Theorem~4.14]{HivertScott}, we have a bijection between paths in the Young--Fibonacci lattice starting from $1$ and ending at a vertex of rank $n$ and the set $I_n$ of symmetric Okada arc diagrams of rank $n$. Therefore, the Gelfand model $M_n$ defined in Theorem \ref{main} can be realized as a model on the space of paths in $\YF$ from $1$ to some $v \in \YF_n$.
\end{remark}
\begin{definition}
Let $S$ be a Fibonacci set of rank $n$. Define
\[
I_n^S=\{t\in\mathcal{D}_n : t=t^*,\ \PLab(t)=S\},
\]
and set $M_n^S=KI_n^S$.
\end{definition}
\begin{remark}\label{nonempty}
Note that $I_n^S \neq \emptyset$, and hence $M_n^S \neq 0$, for all $S \in \YF_n$ by \cite[Lemma 5.15]{HivertScott} and the gluing lemma \cite[Lemma 4.4]{HivertScott}. Moreover $M_n= \bigoplus_{\substack{S \in \YF_n}}M_n^S$, and by the definition of the module action \eqref{action} each $M_n^S$ is an $O_n(X,Y)$-module.
\end{remark}

\begin{example}\label{2 case}
  This example constructs a Gelfand model for $O_2=O_2(X,Y)$. Recall that $O_2$ is
the two-dimensional algebra over $K$ generated by $E_1$, subject
to the relation
$
E_1^2=x_1E_1.$
Hence $
O_2\cong K[z]/\langle z(z-x_1)\rangle$ as algebras.
Since the ideals $\langle z\rangle$ and $\langle z-x_1\rangle$ are
comaximal, the Chinese remainder theorem gives
\[
O_2\cong
K[z]/\langle z\rangle
\times
K[z]/\langle z-x_1\rangle
\cong K\times K.
\]
Thus the algebra $O_2$ has two irreducible representations, up to isomorphism, each of dimension $1$.

We have two Fibonacci sets of rank $2$, namely $S=\{1,2\}$ and $S=\emptyset$. Then we have the representations $M_2^S$ corresponding to these two Fibonacci sets, and both of them are one-dimensional. Hence they are irreducible.
For $S=\{1,2\}$, the module $M_2^{\{1,2\}}$ has basis $\{\mathrm{id}_2\}$. For $S=\emptyset$, the module $M_2^\emptyset$ has basis $\{G_1\}$.
We now show that these two modules are not isomorphic. Suppose, for contradiction, that they are isomorphic. Let $\Phi:M_2^\emptyset\to M_2^{\{1,2\}}$ be an isomorphism. Then $\Phi(G_1)=k\cdot \mathrm{id}_2$ for some nonzero scalar $k$.
Now $\Phi(G_1\cdot G_1)=\Phi(x_1G_1)=kx_1\cdot \mathrm{id}_2$. On the other hand, since $\Phi$ is a module homomorphism, we have $\Phi(G_1\cdot G_1)=G_1\cdot \Phi(G_1)=k(G_1\cdot \mathrm{id}_2)$. But $G_1\cdot \mathrm{id}_2=0$, since $G_1^*=G_1$ and $\PLab([G_1\circ \mathrm{id}_2\circ G_1])=\emptyset\neq \{1,2\}=\PLab(\mathrm{id}_2)$.
Thus $kx_1\cdot \mathrm{id}_2=0$. Since $k\neq 0$ and $x_1\neq 0$, this is a contradiction. Therefore $M_2^\emptyset$ and $M_2^{\{1,2\}}$ are not isomorphic.
Thus the two modules $M_2^\emptyset$ and $M_2^{\{1,2\}}$ are irreducible and non-isomorphic. Since $O_2$ has only two irreducible representations up to isomorphism,
these exhaust all the irreducible representations of $O_2$.
\end{example}

\section{The Jones basic construction for Okada algebras}
In this section, we use the Jones basic construction to describe the Gelfand model for \(O_n\) inductively. We identify suitable idempotents \(e_n\in O_n\) and consider the ideals
$J_n=O_ne_nO_n.$
The complementary subalgebras \(C_n\) are isomorphic as algebras to the Okada algebras \(O_{n-1}\), while double centralizer theory relates the representation theory of \(J_n\) to that of \(O_{n-2}\). This allows us to construct the model representation for \(O_n\) recursively from the model representations of the smaller Okada algebras.
\subsection{The idempotents and the conditional expectation}
For $n\geq 3$, set
\[
e_n:=\frac{1}{x_{n-1}}\,G_{n-1}\in O_n(X,Y).
\]

\begin{lemma}\label{jones lemma}\leavevmode
Let $n\geq 3$.
\begin{enumerate}
\item $e_n^2=e_n$, and $ae_n=e_na$ for all $a\in O_{n-2}$.
\item For $a\in O_{n-2}$, $aG_{n-1}=0$ implies $a=0$.
\item $O_{n-2}e_n=e_nO_ne_n$.
\end{enumerate}
\end{lemma}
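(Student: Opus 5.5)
Parts (1) and (2) are short consequences of the defining relations and the diagram basis. For (1), the relation $E_{n-1}^2=x_{n-1}E_{n-1}$ gives $e_n^2=x_{n-1}^{-2}G_{n-1}^2=x_{n-1}^{-1}G_{n-1}=e_n$. The subalgebra $O_{n-2}$, embedded via $\iota_{n-1}\circ\iota_{n-2}$, is generated by $G_1,\dots,G_{n-3}$. Each of these commutes with $G_{n-1}$ by the far-commutation relation $E_iE_j=E_jE_i$ for $|i-j|\ge2$. Hence $ae_n=e_na$ for all $a\in O_{n-2}$.

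For (2), write $a=\sum_{d\in\mathcal D_{n-2}}c_d\,d$ in the diagram basis. By Lemma \ref{properties of lambda}(2), $\lambda(d,G_{n-1})=1$, so $aG_{n-1}=\sum c_d[d\circ G_{n-1}]$. The diagram $[d\circ G_{n-1}]$ is $d$ with its two top propagating arcs replaced by $h(n-1\arcjoin n)=n-1$ and $h(\overline{n-1}\arcjoin\overline n)=n-1$; this is described explicitly in the proof of that lemma. The map $d\mapsto[d\circ G_{n-1}]$ is therefore injective, since deleting those two arcs and reinserting the propagating ones recovers $d$. Its images are distinct basis elements of $O_n$, so $aG_{n-1}=0$ forces all $c_d=0$.

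Part (3) carries the real content. The inclusion $O_{n-2}e_n\subseteq e_nO_ne_n$ follows from (1): $ae_n=e_nae_n$. For the reverse inclusion it suffices to show that for every $d\in\mathcal D_n$,
\[
G_{n-1}\,d\,G_{n-1}\in O_{n-2}G_{n-1}.
\]
I would argue diagrammatically. In $[G_{n-1}\circ d\circ G_{n-1}]$, the left boundary contains the arc $h(n-1\arcjoin n)=n-1$ and the right boundary contains $h(\overline{n-1}\arcjoin\overline n)=n-1$, both coming from the outer copies of $G_{n-1}$. The remaining vertices $1,\dots,n-2$ on each side are joined, through the middle, by a noncrossing matching. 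The vertices $\overline{n-1},\overline n$ of the left $G_{n-1}$ feed into $d$, and $d$'s outputs feed into $n-1,n$ of the right $G_{n-1}$. So the strands through the middle either close into loops, contributing scalars, or connect the arcs to each other. The labels are minima, and every middle strand meeting the region near $n-1,n$ gets capped by arcs of height $n-1$, so every surviving arc on vertices $\le n-2$ keeps a label satisfying the constraints in rank $n-2$.

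The resulting diagram should therefore be of the form $[\iota(d'')\circ G_{n-1}]$ for some $d''\in\mathcal D_{n-2}$. To confirm this, I would use the uniqueness and structure of Proposition \ref{J_n}, or directly check that removing the two $G_{n-1}$-arcs yields a valid rank $n-2$ Okada diagram. The scalar $\lambda$ is absorbed into the coefficient. Then $e_nde_n=x_{n-1}^{-2}\lambda\,[d''\circ G_{n-1}]=c\,d''e_n\in O_{n-2}e_n$.

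I expect the main obstacle to be the label bookkeeping. One must show that the arcs on $\{1,\dots,n-2\}$ in $[G_{n-1}\circ d\circ G_{n-1}]$ carry height labels at most $n-2$ and respect parity and nesting as a rank $n-2$ diagram. Failing a clean direct argument, I would instead write $d$ as a word in $G_1,\dots,G_{n-1}$, using that the $G_i$ generate $\mathcal D_n$. I would then induct on word length, reducing $G_{n-1}wG_{n-1}$ with the relations $E_{n-1}^2=x_{n-1}E_{n-1}$, $E_{n-1}E_{n-2}E_{n-1}=y_{n-2}E_{n-1}$ and far commutation. This is the standard argument showing $E_{n-1}O_nE_{n-1}=O_{n-2}E_{n-1}$ when a word contains at most one occurrence of $E_{n-1}$ between the outer ones. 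Since every word can be brought into a normal form with a single $E_{n-1}$ (compare Proposition \ref{J_n}), only $w\in O_{n-1}$ and $w=uG_{n-1}v$ with $u,v\in O_{n-1}$ need handling. In turn $O_{n-1}$ is spanned by $O_{n-2}$ and $O_{n-2}G_{n-2}\cdots G_iO_{n-2}$-type elements, which reduce via $G_{n-1}G_{n-2}G_{n-1}=y_{n-2}G_{n-1}$.
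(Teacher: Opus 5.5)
Your proposal is correct and follows the paper's proof essentially step for step. Parts (1) and (2) are argued identically, and for (3) the paper likewise observes that $[G_{n-1}\circ d\circ G_{n-1}]$ contains the two caps of height $n-1$ and equals $[a\circ G_{n-1}]$, where $a\in\mathcal D_{n-2}$ is obtained by replacing those caps with the propagating arcs of heights $n-1$ and $n$. The label bookkeeping you single out as the main obstacle is automatic, and your algebraic fallback is not needed: $[G_{n-1}\circ d\circ G_{n-1}]$ already lies in $\mathcal D_n$, and its remaining arcs have endpoints of absolute value at most $n-2$ in an unchanged relative order, so the bound, parity and nesting conditions in rank $n-2$ are simply inherited.
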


\begin{proof}
(1). Since $G_{n-1}G_{n-1}=x_{n-1}G_{n-1}$, we have
$e_n^2=x_{n-1}^{-2}G_{n-1}G_{n-1}=x_{n-1}^{-1}G_{n-1}=e_n$.
The algebra $O_{n-2}$ is generated by $G_1,\ldots,G_{n-3}$, and
$|i-(n-1)|\geq 2$ for $1\leq i\leq n-3$, so $G_iG_{n-1}=G_{n-1}G_i$ for all
such $i$. Hence $ae_n=e_na$ for all $a\in O_{n-2}$.
\vspace{2mm}
\\\noindent
(2). Let $a$ be a basis Okada arc diagram of rank $n-2$. We regard $a$ as an
element of $O_n$ by adding the two propagating arcs
$h(n-1\arcjoin\overline{n-1})=n-1$ and $h(n\arcjoin\overline n)=n$. Then
$aG_{n-1}=\lambda(a,G_{n-1})[a\circ G_{n-1}]$ with $\lambda(a,G_{n-1})\neq0$,
and $[a\circ G_{n-1}]$ is the basis Okada arc diagram obtained by replacing
these propagating arcs by the non-propagating arcs $h(n-1\arcjoin n)=n-1$ and
$h(\overline{n-1}\arcjoin\overline n)=n-1$, while all the remaining arcs and
their height labels are unchanged. Thus distinct basis diagrams of $O_{n-2}$
give distinct diagrams after multiplication by $G_{n-1}$, since the original
diagram is recovered by removing those two non-propagating arcs. Hence their
images are linearly independent, and therefore $aG_{n-1}=0$ implies $a=0$ for
all $a\in O_{n-2}$.
\vspace{2mm}
\\\noindent
(3). Let $d\in O_{n-2}$. By Lemma~\ref{jones lemma}(1), $de_n=de_n^2=e_nde_n\in e_nO_ne_n$, so
$O_{n-2}e_n\subseteq e_nO_ne_n$. Conversely, let $d$ be a basis Okada arc
diagram in $O_n$. Then $[G_{n-1}\circ d\circ G_{n-1}]$ is an Okada arc diagram
containing the two non-propagating arcs $h(n-1\arcjoin n)=n-1$ and
$h(\overline{n-1}\arcjoin\overline n)=n-1$. Removing these arcs and replacing
them by the propagating arcs $h(n-1\arcjoin\overline{n-1})=n-1$ and
$h(n\arcjoin\overline n)=n$ gives an element $a$ of $O_{n-2}$, and
$[G_{n-1}\circ d\circ G_{n-1}]=[a\circ G_{n-1}]$. Hence
$e_nO_ne_n\subseteq O_{n-2}e_n$.
\end{proof}
Recall that for generic $X,Y$, $O_1(X,Y) \subseteq O_2(X,Y) \subseteq \cdots $ forms a tower of finite-dimensional split semisimple algebras with 1 over $K$. Now we record the following properties of the idempotent $e_n$.
\begin{proposition}\label{idempotent}
   The algebras $O_{n-2}$ and $O_{n-2}e_n$ are isomorphic via the map $a \mapsto a e_n$ where $a \in O_{n-2}$.
\end{proposition}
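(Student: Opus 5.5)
We show that the map $\phi\colon O_{n-2}\to O_{n-2}e_n$, $\phi(a)=ae_n$, is an algebra isomorphism, where $O_{n-2}e_n$ carries the multiplication inherited from $O_n$ and has identity element $e_n$. We check, in order, that $\phi$ is linear, multiplicative, unital, surjective and injective.

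Linearity is immediate from bilinearity of the product in $O_n$. For multiplicativity, take $a,b\in O_{n-2}$ and use Lemma~\ref{jones lemma}(1) twice: once to move $e_n$ past $b$, and once to replace $e_n^2$ by $e_n$. This gives
\[
\phi(a)\phi(b)=ae_nbe_n=abe_n^2=abe_n=\phi(ab).
\]
Unitality holds because $\phi(1)=e_n$, and $e_n$ is the identity of $O_{n-2}e_n$. Indeed, for $a\in O_{n-2}$ we have $(ae_n)e_n=ae_n$ and $e_n(ae_n)=ae_n^2=ae_n$, again by Lemma~\ref{jones lemma}(1). Surjectivity is built into the definition, since $O_{n-2}e_n=\{ae_n : a\in O_{n-2}\}$.

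Injectivity is the only step with content, and it reduces to Lemma~\ref{jones lemma}(2). Suppose $ae_n=0$. Since $e_n=x_{n-1}^{-1}G_{n-1}$ and $x_{n-1}\neq 0$ in the generic field $K$, this gives $aG_{n-1}=0$. Lemma~\ref{jones lemma}(2) then forces $a=0$.

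Underlying that lemma is the fact that the diagrams $[a\circ G_{n-1}]$, for $a$ a basis diagram of rank $n-2$, are distinct and each appears with nonzero coefficient $\lambda(a,G_{n-1})=1$, by Lemma~\ref{properties of lambda}(2). As this is already proved in the paper, we can cite Lemma~\ref{jones lemma}(2) directly and no further obstacle arises. As a consistency check, both $O_{n-2}$ and $O_{n-2}e_n$ then have dimension $(n-2)!$.
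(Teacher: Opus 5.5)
Your proof is correct and matches the paper's argument: multiplicativity comes from Lemma~\ref{jones lemma}(1), surjectivity holds by definition, and injectivity comes from Lemma~\ref{jones lemma}(2). Your extra check that $\phi(1)=e_n$ is the identity of $O_{n-2}e_n$ is a harmless addition that the paper leaves implicit.
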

\begin{proof}
 Consider the linear map
$\varphi:O_{n-2}\longrightarrow O_{n-2}e_n,\quad \varphi(a)=ae_n.$ The map $\varphi$ is an algebra homomorphism, since
$(ae_n)(be_n)=abe_n=\varphi(ab)$ for $a, b \in O_{n-2}$ by Lemma~\ref{jones lemma}(1).
This map is surjective by the definition of $O_{n-2}e_n$. The map is injective by Lemma \ref{jones lemma}(2). Hence $\varphi$ is an isomorphism.
\end{proof}
Since we have $O_{n-2} \cong O_{n-2}e_n = e_nO_ne_n$, via the map $a \mapsto ae_n$ by Lemma \ref{jones lemma}(3) and  Proposition \ref{idempotent}, it makes sense to define the following. 
\begin{definition}\label{conditional expectation}
The map $\epsilon_n\colon O_n \to O_{n-2}$ sending $a \in O_n$ to the unique
element $\epsilon_n(a) \in O_{n-2}$ satisfying $e_nae_n=\epsilon_n(a)e_n$ is
called the \emph{conditional expectation}. This is
\cite[(5.1.2)]{HalversonPartition} with $A=O_{n-2}$, $B=O_n$ and $e=e_n$; the
notion goes back to \cite{Jones}.
\end{definition}
\subsection{The recursive structure of the irreducible representations}
Now we state the double centralizer theorem for split semisimple algebras, which
will be used in this section.
\begin{theorem}[{see \cite[Theorem 1]{Andr}}]\label{double centralizer}
Let $A$ be a finite-dimensional split semisimple algebra over a field $K$ of
characteristic zero and let $V$ be a finite-dimensional faithful $A$-module.
Let $C=\mathrm{End}_A(V)$. Then the following hold.
\begin{enumerate}
\item The image of $A$ in $\mathrm{End}_K(V)$ is equal to $\mathrm{End}_C(V)$.
\item The algebra $C$ is split semisimple.
\item The map $V_i\mapsto \operatorname{Hom}_A(V_i,V)$ gives a bijection between
a complete set of pairwise non-isomorphic irreducible $A$-modules and a complete
set of pairwise non-isomorphic irreducible $C$-modules. In particular, the index
sets $\Lambda_A$ and $\Lambda_C$ for the irreducible representations of $A$ and
of $C$ may be identified.
\end{enumerate}
\end{theorem}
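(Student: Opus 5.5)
We prove the double centralizer theorem from the Artin--Wedderburn decomposition, using the hypothesis that $A$ is split semisimple. First we fix $A\cong\bigoplus_{i=1}^r M_{d_i}(K)$ and let $V_1,\dots,V_r$ be the corresponding irreducible modules, so $V_i\cong K^{d_i}$ and $\mathrm{End}_A(V_i)=K$. Since $A$ is semisimple, $V$ decomposes as $V\cong\bigoplus_i V_i^{\oplus m_i}$. Faithfulness means every $m_i\ge 1$: if some $m_i=0$, the $i$th matrix block would act by zero. Writing $V\cong\bigoplus_i V_i\otimes_K K^{m_i}$, Schur's lemma together with $\mathrm{End}_A(V_i)=K$ gives $\operatorname{Hom}_A(V_i,V_j)=0$ for $i\ne j$. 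Hence
\[
C=\mathrm{End}_A(V)\cong\bigoplus_i \mathrm{End}_K(K^{m_i})\cong\bigoplus_i M_{m_i}(K),
\]
which is split semisimple. This proves (2).

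For (3), we identify $\operatorname{Hom}_A(V_i,V)$ with $K^{m_i}$, and check that $C$ acts on it by post-composition through the $i$th factor $M_{m_i}(K)$. Each $K^{m_i}$ is then the natural simple module of the $i$th block, and for distinct $i$ these modules are pairwise non-isomorphic because different blocks act nontrivially on them. Since there are exactly $r$ blocks, this exhausts the irreducible $C$-modules, so $V_i\mapsto\operatorname{Hom}_A(V_i,V)$ is the desired bijection.

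For (1), the image of $A$ in $\mathrm{End}_K(V)$ is $\bigoplus_i M_{d_i}(K)\otimes \mathrm{id}_{K^{m_i}}$, and injectivity of $A$ here again uses faithfulness. Now apply the computation of (2) with the roles reversed: $V$ is a faithful module for the split semisimple algebra $C$, with $V\cong\bigoplus_i K^{m_i}\otimes V_i$ and simple summands $K^{m_i}$ of multiplicity $d_i$. Then
\[
\mathrm{End}_C(V)=\bigoplus_i \mathrm{id}_{K^{m_i}}\otimes \mathrm{End}_K(V_i)=\bigoplus_i \mathrm{id}\otimes M_{d_i}(K),
\]
which is exactly the image of $A$.

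The only delicate point is the block-diagonality of endomorphism algebras, namely that there are no cross terms between isotypic components. This follows directly from Schur's lemma together with splitness ($\mathrm{End}_A(V_i)=K$). Splitness is essential here: without it, $C$ would be a sum of matrix algebras over division algebras $D_i$, and the dimension-counting identifications above would fail. Characteristic zero is not actually needed beyond guaranteeing semisimplicity, which is already part of the hypothesis.
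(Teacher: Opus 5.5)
Your argument is correct and complete. The paper gives no proof of this theorem: it imports it from \cite[Theorem 1]{Andr}, and it only uses part (3), applied to $J_n$ and $O_{n-2}$ acting on $O_ne_n$. What you supply is the standard self-contained Wedderburn-block proof. You decompose $V=\bigoplus_i V_i\otimes K^{m_i}$ and obtain $C\cong\bigoplus_i M_{m_i}(K)$. You identify $\operatorname{Hom}_A(V_i,V)\cong K^{m_i}$ with the natural simple module of the $i$th block. You then rerun the same computation for $C$ acting on $V$, with multiplicities $d_i$, to recover the image of $A$.

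Compared with the bare citation, your version shows exactly where each hypothesis enters. Faithfulness gives $m_i\ge 1$, so every $\operatorname{Hom}_A(V_i,V)$ is nonzero and the map in (3) is defined on all of $\Lambda_A$. Splitness gives $\mathrm{End}_A(V_i)=K$, and hence the split form of $C$. Characteristic zero plays no role, which fits the paper's repeated claim that Halverson's arguments over $\mathbb{C}$ carry over to split semisimple algebras over any field.

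One small correction to your closing remarks. The absence of cross terms between isotypic components comes from Schur's lemma alone, not from splitness. Parts (1) and (3) in fact hold for any finite-dimensional semisimple $A$, with $C\cong\bigoplus_i M_{m_i}(D_i)$ where $D_i=\mathrm{End}_A(V_i)$. Splitness is genuinely needed only for the conclusion in (2) that $C$ is split, and for writing the multiplicity spaces as $K^{m_i}$.
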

The following proposition together with the double centralizer theorem \ref{double centralizer} gives a recursive description of the irreducible representations of the Okada algebra $O_n$.
\begin{proposition}
     View $O_ne_n$ as a module for $O_ne_nO_n$ by multiplication on the left and as a module for $e_nO_ne_n$ by multiplication on the right. Then
    \[
     J_n:= O_ne_n O_n \cong \mathrm{End}_{e_nO_ne_n}(O_ne_n) ~~\text{and} ~~ O_{n-2} \cong e_nO_ne_n \cong \mathrm{End}_{O_ne_nO_n}(O_ne_n). 
    \] 
\end{proposition}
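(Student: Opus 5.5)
The plan is to apply the double centralizer theorem (Theorem~\ref{double centralizer}) twice. Both applications use the same module $V=O_ne_n$, once over $J_n=O_ne_nO_n$ and once over $e_nO_ne_n$. The algebra-theoretic input we need is that $J_n$ and $e_nO_ne_n$ are split semisimple. For $e_nO_ne_n$ this is immediate: $e_nO_ne_n=O_{n-2}e_n\cong O_{n-2}$ by Lemma~\ref{jones lemma}(3) and Proposition~\ref{idempotent}, and $O_{n-2}$ is split semisimple by Proposition~\ref{split}. For $J_n$ we argue as follows. It is a two-sided ideal of the split semisimple algebra $O_n$, so it is a direct sum of some of the Wedderburn blocks $M_{d_i}(K)$. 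It is therefore a split semisimple algebra in its own right, whose unit is a central idempotent $z$ of $O_n$.

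First we show the second isomorphism, $e_nO_ne_n\cong\mathrm{End}_{O_ne_nO_n}(O_ne_n)$. There is the standard map $\rho\colon e_nO_ne_n\to\mathrm{End}_{J_n}(O_ne_n)$ given by right multiplication, $\rho(e_nbe_n)(ae_n)=ae_nbe_n$. It commutes with left multiplication by $J_n$, and it is an anti-homomorphism; we either compose with the obvious identification or note that the paper's convention treats $O_ne_n$ as a right module. For injectivity, suppose $ae_nbe_n\,e_n=0$ for all $a$. Taking $a=e_n$ gives $e_nbe_n=0$. For surjectivity, let $\phi\in\mathrm{End}_{J_n}(O_ne_n)$. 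Then $\phi(ae_n)=\phi(ae_n\cdot e_n)=ae_n\phi(e_n)$, because $ae_n\in J_n$ acts on the left. Moreover $\phi(e_n)=\phi(e_ne_n)=e_n\phi(e_n)\in e_nO_ne_n$, so $\phi=\rho(\phi(e_n))$. This step is purely formal and needs no semisimplicity.

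Next we show the first isomorphism. View $V=O_ne_n$ as a right $e_nO_ne_n$-module, which is a left module over the opposite algebra; that algebra is still split semisimple. The module is faithful, since $e_n\in V$ and $e_n\cdot c=c$ for $c\in e_nO_ne_n$. Theorem~\ref{double centralizer}(1) then says that $\mathrm{End}_{e_nO_ne_n}(V)$ equals the bicommutant of $e_nO_ne_n$ acting on $V$. By the previous paragraph, $\mathrm{End}_{e_nO_ne_n}(V)$ is also the commutant of the left action of $J_n$. The theorem applied to $J_n$ acting on $V$ gives that the image of $J_n$ equals this bicommutant. It remains to check that the left action $J_n\to\mathrm{End}_K(O_ne_n)$ is injective, and this is the main obstacle. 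Suppose $j\in J_n$ satisfies $jO_ne_n=0$. Then $jO_ne_nO_n=0$, that is, $jJ_n=0$. In particular $j=jz=0$, because $z\in J_n$ is the unit of $J_n$. Hence $V$ is faithful over $J_n$, and Theorem~\ref{double centralizer}(1) identifies $J_n$ with $\mathrm{End}_{e_nO_ne_n}(O_ne_n)$.
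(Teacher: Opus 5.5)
Your proposal is correct in substance. It is the standard Jones basic construction argument that the paper does not write out; the paper checks the hypotheses via Lemma~\ref{jones lemma}, Proposition~\ref{idempotent} and Proposition~\ref{split} and then cites \cite[Proposition 5.1.3]{HalversonPartition}. Your formal identification of $e_nO_ne_n$ with $\mathrm{End}_{J_n}(O_ne_n)$ is right. It holds only up to $\mathrm{op}$, which is harmless because a split semisimple algebra is isomorphic to its opposite by transposing each matrix block. Your faithfulness argument $j=jz=0$ is also right.

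One step in your last paragraph needs repair, because commutant and bicommutant are interchanged there. Write $L$ and $R$ for the images of $J_n$ (left multiplication) and of $e_nO_ne_n$ (right multiplication) in $\mathrm{End}_K(O_ne_n)$, and use a prime for the commutant. Your sentences assert two things:
\begin{itemize}
\item $R'=R''$ (``$\mathrm{End}_{e_nO_ne_n}(V)$ equals the bicommutant of $e_nO_ne_n$''), and
\item $R'=L'$ (``$\mathrm{End}_{e_nO_ne_n}(V)$ is also the commutant of the left action of $J_n$'').
\end{itemize}
Neither is true. Combined with $R=L'$ and $L=L''$, either one forces $L=L'$, which would make $J_n$ commutative. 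But $J_n$ is not commutative: $G_{n-1}$ and $G_{n-2}G_{n-1}$ both lie in $J_n$, and
\begin{itemize}
\item $G_{n-1}\cdot G_{n-2}G_{n-1}=y_{n-2}G_{n-1}$, while
\item $G_{n-2}G_{n-1}\cdot G_{n-1}=x_{n-1}G_{n-2}G_{n-1}$,
\end{itemize}
and these are nonzero multiples of distinct basis diagrams.

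What your second paragraph actually proves is $R=L'$. Hence $\mathrm{End}_{e_nO_ne_n}(O_ne_n)=R'=L''$. Theorem~\ref{double centralizer}(1), applied to the faithful $J_n$-module $O_ne_n$, then gives $L''=L\cong J_n$. With this chain you no longer need to apply the theorem to $e_nO_ne_n$ or to check that module's faithfulness.
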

\begin{proof}
By Lemma~\ref{jones lemma} and Proposition~\ref{idempotent}, the hypotheses of \cite[(5.1.1)]{HalversonPartition} are satisfied with $A=O_{n-2}, B=O_n$ and $e=e_n$, and the result follows exactly as in \cite[Proposition 5.1.3]{HalversonPartition}. Note that Halverson works over the complex numbers, but the proof works for any field $K$ over which the algebra is split semisimple; moreover,  $O_n$ is split semisimple by Proposition \ref{split}.
\end{proof}
Note that $J_n$ is a two-sided ideal of $O_n$, and is semisimple, being an ideal of a semisimple algebra. 
Since we have $O_{n-2} \cong e_n O_n e_n$, it follows that 
\begin{equation}\label{double}
    J_n \cong \mathrm{End}_{O_{n-2}} (O_ne_n) ~~\text{and}~~ O_{n-2}  \cong \mathrm{End}_{J_n}(O_ne_n). 
\end{equation}
\begin{proposition} Let $V$ be the span of all Okada arc diagrams of rank $n$ whose propagating label set does not contain $n$. Then $J_n=V$.
\end{proposition}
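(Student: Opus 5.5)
We prove the two inclusions $J_n\subseteq V$ and $V\subseteq J_n$ separately, working throughout with the basis of Okada arc diagrams.

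\emph{Step 1: $J_n\subseteq V$.} Since $J_n=O_nG_{n-1}O_n$ is spanned by the scalar multiples of the diagrams $[d_1\circ G_{n-1}\circ d_2]$ with $d_1,d_2\in\mathcal D_n$, it suffices to show that $n\notin\PLab([d_1\circ G_{n-1}\circ d_2])$. First note that $\PLab(G_{n-1})=\{1,\ldots,n-2\}$, because the only propagating arcs of $G_{n-1}$ are $h(j\arcjoin\overline j)=j$ for $j\le n-2$. Next, by Proposition~\ref{structure}, applied twice, $\PLab([d_1\circ G_{n-1}\circ d_2])\preccurlyeq\PLab(G_{n-1})$. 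If $S=\{s_1<\cdots<s_k\}\preccurlyeq T=\{t_1<\cdots<t_\ell\}$, then $s_k\le t_\ell$. Hence the largest element of $\PLab([d_1\circ G_{n-1}\circ d_2])$ is at most $n-2$, and in particular $n\notin\PLab$. (If the propagating label set is empty there is nothing to check.) The scalars $\lambda$ do not affect which diagrams occur, so $J_n\subseteq V$.

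\emph{Step 2: $V\subseteq J_n$.} Let $d$ be a diagram with $n\notin\PLab(d)$. Then $d$ does not contain the arc $h(n\arcjoin\overline n)=n$, since that arc would put $n$ into $\PLab(d)$. Proposition~\ref{J_n} therefore applies and gives
\[
d=[\iota_{n-1}(d')\circ G_{n-1}\circ G_{n-2}\circ\cdots\circ G_i].
\]
In the algebra, the product $\iota_{n-1}(d')G_{n-1}G_{n-2}\cdots G_i$ equals $\mu d$ for some monomial $\mu$ in the parameters. This product lies in $O_nG_{n-1}O_n=J_n$. Since the parameters are generic, $\mu\neq0$; each $\lambda$ is a nonzero monomial, as in \cite[Proposition 6.2]{HivertScott}. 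Thus $d\in J_n$, and since $V$ is spanned by such $d$, we get $V\subseteq J_n$.

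\emph{Main obstacle.} The delicate point is the converse direction, namely that $n\notin\PLab(d)$ really does exclude the arc $h(n\arcjoin\overline n)=n$. If $d$ contains this arc, then $n$ is the label of a propagating half-arc, so $n\in\PLab(d)$. Conversely, the label $n$ can only occur on an arc with $\min(|a|,|b|)=n$, which forces the arc $n\arcjoin\overline n$. So "$d$ does not contain $h(n\arcjoin\overline n)=n$" is equivalent to $n\notin\PLab(d)$. We should also double-check that composing the non-propagating arcs of $G_{n-1}$ cannot produce a propagating label $n$ elsewhere; the dominance argument in Step 1 settles this cleanly. As an optional sanity check, one can count dimensions: $\dim V=n!-(n-1)!$, since the diagrams containing the arc $h(n\arcjoin\overline n)=n$ are exactly $\iota_{n-1}(\mathcal D_{n-1})$.
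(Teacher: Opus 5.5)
Your proof is correct and follows the paper's argument. For $J_n\subseteq V$ you use Proposition~\ref{structure} with $n\notin\PLab(G_{n-1})$, for $V\subseteq J_n$ you use the factorization of Proposition~\ref{J_n} with a nonzero monomial scalar, and you also make explicit two points the paper leaves implicit: that dominance forces $\max\PLab([d_1\circ G_{n-1}\circ d_2])\le n-2$, and that $n\in\PLab(d)$ holds exactly when $d$ contains the arc $h(n\arcjoin\overline n)=n$.
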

\begin{proof}
For Okada basis arc diagrams $d$ and $d'$ of rank $n$, Proposition \ref{structure} gives $\PLab([d\circ G_{n-1}\circ d']) \preccurlyeq \PLab(G_{n-1}),$ and $n \notin \PLab(G_{n-1})$; hence $ n \notin \PLab([d\circ G_{n-1}\circ d']).$ It follows that $O_ne_nO_n \subseteq V$. Conversely, given an arc diagram $d \in V$ (equivalently, the arc diagram does not contain the arc $h(n \arcjoin \overline{n})=n$), we have $d= \alpha_d ~\iota_{n-1}(d') G_{n-1}t $ for some scalar $\alpha_d \neq 0, d' \in \mathcal{D}_{n-1}$ and $t \in O_n$ by Proposition \ref{J_n} and hence $d \in J_n$. 
\end{proof}

Let $C_n$ be the span of the Okada arc diagrams of rank $n$ whose propagating label set contains $n$. Then $C_n= \iota_{n-1}(O_{n-1})$ and  therefore $C_n \cong O_{n-1}$ is a subalgebra of $O_n$ such that 
\begin{equation}\label{decomposition}
O_n=J_n\oplus C_n \qquad \text{as vector spaces and}
\qquad
O_n/J_n\cong_{\mathrm{alg}}C_n
\cong_{\mathrm{alg}}O_{n-1}.
\end{equation}
It follows from the double centralizer theorem \ref{double centralizer} applied to \eqref{double} that $O_{n-2}$ and $J_n$ have the same indexing set of irreducible modules. From \eqref{decomposition}, we have 
\begin{equation}\label{recursion}
\Lambda_{O_n}= \Lambda_{J_n} \sqcup \Lambda_{C_n}= \Lambda_{O_{n-2}}\sqcup \Lambda_{O_{n-1}} 
    \end{equation}
where $\Lambda_A$ denotes the index set for the irreducible representations of the algebra $A$. The recursive application of \eqref{recursion} yields  
\begin{equation}
    \Lambda_{O_n}=\begin{cases}
     \Lambda_{O_{n-1}} \sqcup \Lambda_{O_{n-3}}\sqcup\cdots \sqcup \Lambda_{O_3}\sqcup  \Lambda_{O_2}& \text{if $n$ is even},\\
      \Lambda_{O_{n-1}} \sqcup \Lambda_{O_{n-3}}\sqcup \cdots \sqcup \Lambda_{O_2} \sqcup \Lambda_{O_1} & \text{if $n$ is odd}.
    \end{cases} 
\end{equation}
This suggests that the irreducible representations of the Okada algebra could be constructed inductively.

\subsection{Character relations}
Now we record some results concerning the characters of these representations. The exposition mainly follows Section 4 of \cite{HalversonReeks}. 
\begin{definition}
    Let $A$ be a finite-dimensional algebra over $K$ and let $W$ be a finite-dimensional $A$-module, with corresponding representation $\rho: A \to \operatorname{End}_K(W)$. The \emph{character} of $W$ is the $K$-linear map $\chi: A \to K$ defined by $\chi(a)= \operatorname{tr}(\rho(a)).$
\end{definition}

Let $a \in J_n$ with $a= a_1e_na_2$ for some $a_1,a_2 \in O_n$. Let $\chi$ be a character of a representation of $O_n$ and $\epsilon_n$ the conditional expectation (Definition \ref{conditional expectation}). Then by the property of trace, 
\[
\chi(a)= \chi(a_1e_na_2)=\chi(a_2a_1e_n^2)=\chi(e_n(a_2a_1)e_n)=\chi(\epsilon_n(a_2a_1)e_n).
\]
Thus it follows from the above calculation and \eqref{decomposition} that the character $\chi$ is completely determined by its values on the elements of $C_n$ and $O_{n-2}e_n$.
\begin{proposition}\label{char}
    For $\lambda \in \Lambda_{O_n}$ and $a \in O_{n-2}$, the corresponding character $\chi_{O_n}^\lambda$ satisfies
\begin{equation}
    \chi_{O_n}^\lambda(ae_n) = \begin{cases}
    \chi_{O_{n-2}}^\lambda(a) & \text{if } \lambda \in \Lambda_{O_{n-2}}, \\
    0 & \text{if } \lambda \in \Lambda_{O_n} \setminus \Lambda_{O_{n-2}}. 
    \end{cases}
\end{equation}
\end{proposition}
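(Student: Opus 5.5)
Let $V^\lambda$ be an irreducible $O_n$-module with character $\chi^\lambda_{O_n}$. For $a\in O_{n-2}$ the element $ae_n = e_n a e_n$ lies in $e_nO_ne_n$ and annihilates $(1-e_n)V^\lambda$. So the plan is to compute the trace of $ae_n$ on $V^\lambda$ through its restriction to $e_nV^\lambda$, and then identify $e_nV^\lambda$ as an $O_{n-2}$-module.

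\textbf{Reduction to $e_nV^\lambda$.} Write $V^\lambda = e_nV^\lambda \oplus (1-e_n)V^\lambda$. Since $ae_n = e_n a e_n$ by Lemma \ref{jones lemma}(1), this operator maps $V^\lambda$ into $e_nV^\lambda$ and kills $(1-e_n)V^\lambda$. Hence
\[
\chi^\lambda_{O_n}(ae_n)=\operatorname{tr}\bigl(ae_n|_{e_nV^\lambda}\bigr).
\]
The space $e_nV^\lambda$ is a module for $e_nO_ne_n\cong O_{n-2}$, via Lemma \ref{jones lemma}(3) and Proposition \ref{idempotent}. Under this isomorphism, $a\in O_{n-2}$ acts on $e_nV^\lambda$ by $ae_n$. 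So $\chi^\lambda_{O_n}(ae_n)$ is the character of the $O_{n-2}$-module $e_nV^\lambda$, evaluated at $a$.

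\textbf{The case $\lambda\in\Lambda_{O_n}\setminus\Lambda_{O_{n-2}}$.} By \eqref{decomposition} and \eqref{recursion}, these are exactly the irreducibles on which $J_n$ acts by zero, i.e.\ those factoring through $O_n/J_n\cong C_n$. Since $e_n\in J_n$, we get $e_nV^\lambda=0$, and the character vanishes.

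\textbf{The case $\lambda\in\Lambda_{O_{n-2}}$.} Here we need $e_nV^\lambda$ to be exactly the irreducible $O_{n-2}$-module labelled $\lambda$, and we take this as the definition of the identification of index sets in \eqref{recursion}. Because $J_n$ is a two-sided ideal and $O_n$ is split semisimple, $J_n$ is a direct sum of matrix blocks of $O_n$, and the irreducible $O_n$-modules not killed by $J_n$ are precisely the irreducible $J_n$-modules. Apply Theorem \ref{double centralizer} to the faithful $J_n$-module $O_ne_n$, whose commutant is $O_{n-2}$ by \eqref{double}. This matches an irreducible $J_n$-module $V$ with $\operatorname{Hom}_{J_n}(V,O_ne_n)$. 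We then use the standard fact that the functor $V\mapsto e_nV$ gives the same correspondence as $\operatorname{Hom}$, namely
\[
\operatorname{Hom}_{O_n}(O_ne_n,V)\cong e_nV,
\]
and, since $O_n$ is semisimple, the Hom-spaces in the two directions have the same dimension. Equivalently, $V\mapsto e_nV$ is the Morita-type bijection between simple modules of $J_n=O_ne_nO_n$ and simple modules of $e_nO_ne_n$; it sends simples to simples because $e_nO_ne_n$ acts irreducibly on each nonzero $e_nV$ when $V$ is simple. With this labelling, $e_nV^\lambda$ is the irreducible $O_{n-2}$-module $\lambda$, and its character at $a$ is $\chi^\lambda_{O_{n-2}}(a)$.

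The main obstacle is making this labelling consistent. One has to check that the bijection from the double centralizer theorem, formulated via $\operatorname{Hom}_{J_n}(V_i,O_ne_n)$, agrees with the functor $e_n(-)$, up to the choice of left and right actions and the isomorphism $a\mapsto ae_n$. I would handle this directly by showing that $e_nV$ is simple over $e_nO_ne_n$, and then simply define the identification $\Lambda_{J_n}=\Lambda_{O_{n-2}}$ by this functor.
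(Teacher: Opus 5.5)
Your argument is correct and is essentially the standard Jones-basic-construction argument that the paper uses by citing Halverson's Proposition 5.1.9: compute the trace of $ae_n=e_nae_n$ on $e_nV^\lambda$, which is zero off $\Lambda_{J_n}$ and is the irreducible $O_{n-2}$-module otherwise; you have simply written it out for split semisimple algebras. Fixing the labelling via $V\mapsto e_nV$ is harmless, because it agrees with the double-centralizer labelling up to the canonical duality, $\operatorname{Hom}_{J_n}(V,O_ne_n)\cong\operatorname{Hom}_{J_n}(O_ne_n,V)^*\cong(e_nV)^*$ through the composition pairing, and in any case the later Gelfand-model argument only needs the identity for some bijection $\Lambda_{O_{n-2}}\leftrightarrow\Lambda_{J_n}$.
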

\begin{proof}
   Halverson proves the analogous result for finite-dimensional semisimple algebras over the complex numbers \cite[Proposition 5.1.9]{HalversonPartition}. But the same argument carries over to any finite-dimensional split semisimple algebra over any field. Since $O_n$ is split semisimple by Proposition \ref{split}, the result follows. 
\end{proof}
 
 Then from \eqref{decomposition} and Proposition \ref{char},
\begin{equation}\label{bridge}
\chi_{O_n}^\lambda(a)=
\begin{cases}
\chi_{C_n}^\lambda(a), & \text{if $\lambda \in \Lambda_{C_n}$ and $a\in C_n$},\\
0, & \text{if $\lambda \in \Lambda_{C_n}$ and $a\in J_n$},\\
\chi_{O_{n-2}}^\lambda(a'), & \text{if $\lambda \in \Lambda_{O_{n-2}}$ and $a=a'e_n$ with $a'\in O_{n-2}$}.
\end{cases}
\end{equation}
The computation of the character values $\chi_{O_n}^\lambda(a)$ for $\lambda \in \Lambda_{O_{n-2}}, a \in C_n$, is more involved and we do not need them. The relations \eqref{bridge} are what allow the argument in the next section to be carried out at the level of characters.
\section{Gelfand model from the Jones basic construction}\label{gelfand section}
This section is devoted to the proof of part~(2) of our main Theorem~\ref{main}. We use Theorem~\ref{gelfand model} to prove that the module $M_n$ is indeed a Gelfand model. We prove this by induction on $n$. We assume that $M_m$ is a Gelfand model for $O_m(X,Y)$ for all $m < n$. The base cases are $m=1$, which is trivial, and $m=2$, done in Example \ref{2 case}. 
\subsection{A Gelfand model criterion}
To proceed further, we construct a collection of submodules
$\{M_n^r\mid 0\leq r\leq n\}$ of $M_n$ such that
$M_n=\bigoplus_{r=0}^n M_n^r$, satisfying the following conditions (A) and (B).
For each integer $0\leq r\leq n$, define
\[
M_n^r=\bigoplus_{\substack{S\in\YF_n\\ \max S=r}}M_n^S
\]
where we use the convention that $\max\emptyset=0$.
Then $M_n^r\neq0$ only if $r\equiv n\pmod 2$.
\begin{enumerate}[label=(\Alph*)]
    \item The character $\phi_n^r$ of $M_n^r$ satisfies
\begin{equation}\label{character relations}
\phi_n^r(a)=
\begin{cases}
\phi_{C_n}(a), & \text{if $r=n$ and $a\in C_n$},\\
0, & \text{if $r=n$ and $a\in J_n$},\\
\phi_{n-2}^r(a'), & \text{if $r<n$ and $a=a'e_n$ with $a'\in O_{n-2}$}.
\end{cases}
\end{equation}
Here $\phi_{C_n}$ denotes the character of the Gelfand model $M_{n-1}$ of $C_n \cong O_{n-1}$.
   \vspace{2mm} 
   \item The modules $M_n^r$ and $M_n^s$ have no common irreducible constituents whenever $r\neq s$. 
\end{enumerate} 
\begin{theorem}\label{gelfand model}
Under the Jones basic construction described in the previous section, if the submodules $M_n^r$ with
$M_n=\bigoplus_{r=0}^n M_n^r$ satisfy (A) and (B), then $M_n$ is a Gelfand model for $O_n(X,Y)$, i.e., each irreducible representation of $O_n(X,Y)$ occurs exactly once in the decomposition of $M_n$.
\end{theorem}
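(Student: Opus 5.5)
We argue at the level of characters. Since $O_n$ is split semisimple (Proposition~\ref{split}) and $K$ has characteristic $0$, a finite-dimensional $O_n$-module is determined up to isomorphism by its character. Moreover, the irreducible characters $\chi^\lambda_{O_n}$, $\lambda\in\Lambda_{O_n}$, are linearly independent. So it suffices to show that the character $\phi_n=\sum_r\phi_n^r$ of $M_n$ equals $\sum_{\lambda\in\Lambda_{O_n}}\chi^\lambda_{O_n}$. Using the decomposition $\Lambda_{O_n}=\Lambda_{C_n}\sqcup\Lambda_{O_{n-2}}$ of \eqref{recursion}, we will match the summand $M_n^n$ with $\Lambda_{C_n}$ and the summand $\bigoplus_{r<n}M_n^r$ with $\Lambda_{O_{n-2}}$.

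\emph{Step 1 (the piece $r=n$).} Write $\phi_n^n=\sum_\lambda m_\lambda\chi^\lambda_{O_n}$ with $m_\lambda\ge 0$. Condition (A) says that $\phi_n^n$ vanishes on $J_n$. By Proposition~\ref{char}, for $\lambda\in\Lambda_{O_{n-2}}$ and $a\in O_{n-2}$ we have $\chi^\lambda_{O_n}(ae_n)=\chi^\lambda_{O_{n-2}}(a)$, and these characters are linearly independent on $O_{n-2}$. Hence vanishing on $O_{n-2}e_n\subseteq J_n$ forces $m_\lambda=0$ for all $\lambda\in\Lambda_{O_{n-2}}$. For the remaining $\lambda\in\Lambda_{C_n}$, the relations \eqref{bridge} give $\chi^\lambda_{O_n}|_{C_n}=\chi^\lambda_{C_n}$. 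By (A), $\phi_n^n|_{C_n}=\phi_{C_n}$, and by the induction hypothesis $\phi_{C_n}=\sum_{\lambda\in\Lambda_{C_n}}\chi^\lambda_{C_n}$. Linear independence of the irreducible characters of $C_n\cong O_{n-1}$ then gives $m_\lambda=1$ for every $\lambda\in\Lambda_{C_n}$. Thus $M_n^n\cong\bigoplus_{\lambda\in\Lambda_{C_n}}V_\lambda$.

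\emph{Step 2 (the pieces $r<n$).} Write $\phi_n^r=\sum_\lambda m^r_\lambda\chi^\lambda_{O_n}$. Evaluate on $a'e_n$ with $a'\in O_{n-2}$, using Proposition~\ref{char}: the constituents in $\Lambda_{C_n}$ contribute $0$, and those in $\Lambda_{O_{n-2}}$ contribute $\chi^\lambda_{O_{n-2}}(a')$. Together with (A) this gives
\[
\sum_{\lambda\in\Lambda_{O_{n-2}}}m^r_\lambda\,\chi^\lambda_{O_{n-2}}=\phi^r_{n-2}.
\]
Summing over $r<n$ and using $M_{n-2}=\bigoplus_r M^r_{n-2}$ together with the induction hypothesis that $M_{n-2}$ is a Gelfand model, we obtain $\sum_{r<n}m^r_\lambda=1$ for every $\lambda\in\Lambda_{O_{n-2}}$. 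So every $\lambda\in\Lambda_{O_{n-2}}$ occurs exactly once in $\bigoplus_{r<n}M_n^r$. (This step needs the indexing of $M_{n-2}$ by $r$ to be compatible, since $\max S\le n-2$ for $S\in\YF_{n-2}$; only the summed identity is used, so this causes no difficulty.)

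\emph{Step 3 (the constituents from $\Lambda_{C_n}$ in the pieces $r<n$).} Here Step 2 gives no information, because these characters vanish on $J_n$. This is the main obstacle, and it is exactly where (B) is needed. Every $\lambda\in\Lambda_{C_n}$ already occurs in $M_n^n$ by Step 1. By (B), $M_n^r$ for $r<n$ shares no irreducible constituent with $M_n^n$, so $m^r_\lambda=0$ for all $\lambda\in\Lambda_{C_n}$ and $r<n$.

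Combining Steps 1--3, each $\lambda\in\Lambda_{C_n}\sqcup\Lambda_{O_{n-2}}=\Lambda_{O_n}$ occurs in $M_n$ with total multiplicity exactly one. Hence $M_n$ is a Gelfand model for $O_n(X,Y)$.
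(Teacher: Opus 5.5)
Your proposal is correct and is essentially the paper's argument: the paper simply invokes the character-level proof of Halverson--Reeks, Theorem~4.1, observing that linear independence of the irreducible characters of a split semisimple algebra is all that is needed, and your Steps~1--3 write that argument out. In them, (A) determines $M_n^n$ and the total $\Lambda_{O_{n-2}}$-multiplicities, and (B) is used only to exclude $\Lambda_{C_n}$-constituents from the pieces with $r<n$; you also make explicit that characteristic $0$ is needed to recover integer multiplicities from $K$-valued characters, which the paper leaves implicit.
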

\begin{proof}
    The proof follows from the arguments of \cite[Theorem 4.1]{HalversonReeks}. 
     Halverson and Reeks work over an algebraically closed field, but their proof of \cite[Theorem 4.1]{HalversonReeks} carries over to finite-dimensional split semisimple algebras over any field. Indeed, they prove it at the level of characters; i.e., they prove that the character of the module $M_n$ is the sum of characters of the irreducible representations of $O_n$ using the analogue of \eqref{bridge}. The irreducible characters of a split semisimple algebra are linearly independent; hence the argument applies verbatim in our setting. 
\end{proof}
\subsection{Proof of part (2) of Theorem~\ref{main}}
Thus, in order to prove that $M_n$ is a Gelfand model for the Okada algebra $O_n(X,Y)$, we need to show that the submodules $M_n^r$ satisfy (A) and (B). We establish this below in Propositions~\ref{A} and \ref{B}. 

\begin{proposition}\label{A}
The character $\phi_n^r$ of $M_n^r$ satisfies equation \eqref{character relations}.
\end{proposition}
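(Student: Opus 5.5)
We verify the three cases of \eqref{character relations} separately. Each character value is a trace, and by \eqref{action} every basis diagram $d$ sends each $t\in I_n$ to a scalar multiple of a single basis element. So for a basis diagram $d$,
\[
\phi_n^r(d)=\sum_{t}\lambda(d,t),
\]
where the sum is over those $t\in I_n^S$ with $\max S=r$ that satisfy $[d\circ t\circ d^*]=t$ and $\PLab([d\circ t\circ d^*])=S$. In every case the plan is therefore to describe these fixed points and compare them with fixed points in a smaller model.

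\emph{Case $r=n$.} Here $S$ contains $n$, so $t$ contains the arc $h(n\arcjoin\overline n)=n$ and hence $t=\iota_{n-1}(t')$ with $t'\in I_{n-1}$. Conversely, every $t'\in I_{n-1}$ gives such a $t$, since $\PLab(\iota_{n-1}(t'))=\PLab(t')\cup\{n\}$ is a Fibonacci set with maximum $n$. This gives a bijection $I_{n-1}\to\bigcup_{\max S=n}I_n^S$.

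For $a=\iota_{n-1}(d')\in C_n$, composition preserves the last propagating arc, so $[a\circ t\circ a^*]=\iota_{n-1}([d'\circ t'\circ d'^*])$. The propagating label condition also matches, and $\lambda(a,t)=\lambda(d',t')$ because $\iota_{n-1}$ is a monoid morphism compatible with the parameter monomials. Hence $M_n^n\cong M_{n-1}$ as $C_n\cong O_{n-1}$-modules, which gives the first line.

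For $a\in J_n$, every basis diagram $d$ has $n\notin\PLab(d)$. By Proposition~\ref{structure}, $\PLab([d\circ t\circ d^*])\preccurlyeq\PLab(d)$, so the condition $\PLab([d\circ t\circ d^*])=S\ni n$ forces $S\preccurlyeq\PLab(d)$. That is impossible, since the largest element of $\PLab(d)$ is less than $n$. Therefore $d$ acts by $0$ on $M_n^n$, and the character vanishes on $J_n$.

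\emph{Case $r<n$.} Since $a'e_n=x_{n-1}^{-1}a'G_{n-1}$, it suffices to take $a'$ a basis diagram of rank $n-2$. By Lemma~\ref{properties of lambda}(2), $a'G_{n-1}=[a'\circ G_{n-1}]=:d$. Using $r<n$ and the parity constraint, $n\notin S$, so $t$ contains no arc $h(n\arcjoin\overline n)=n$.

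The key step is to show that a fixed point $t$ of $d$ must contain the arcs $h(n-1\arcjoin n)=n-1$ and $h(\overline{n-1}\arcjoin\overline n)=n-1$, because $|t\rangle=|[d\circ t]\rangle$ and the left half of $[d\circ t]$ contains $n-1\arcjoin n$ with label $n-1$. Such $t$ are exactly $[t''\circ G_{n-1}]$ with $t''\in I_{n-2}^{S}$, where $S$ is now read as a Fibonacci set of rank $n-2$; this is legitimate since $\max S\le n-2$ and the parity is unchanged.

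Next I would check that $[d\circ t\circ d^*]=[(a'\circ t''\circ a'^*)\circ G_{n-1}]$. The composition produces a loop at positions $n-1,n$, so one needs to compute the scalars. The expectation is $\lambda(d,t)=x_{n-1}\lambda(a',t'')$, which cancels the factor $x_{n-1}^{-1}$ in $e_n$; this should follow from $e_n^2=e_n$ and the centrality of $e_n$ over $O_{n-2}$ (Lemma~\ref{jones lemma}). Granting this, $\phi_n^r(a'e_n)=\phi_{n-2}^r(a')$.

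The main obstacle is the case $r<n$: proving that fixed points of $d=[a'\circ G_{n-1}]$ are exactly the diagrams $[t''\circ G_{n-1}]$, and pinning down the scalar $\lambda(d,t)$ exactly. For the scalar I would first try the associativity identity from the proof of part (1) together with Lemma~\ref{properties of lambda}(1). If that fails, I would fall back on the length formula from \cite[Proposition 6.2]{HivertScott}.
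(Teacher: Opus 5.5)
Your proposal is correct and follows the paper's proof step for step. It uses the same vanishing argument on $J_n$ via Proposition~\ref{structure}, the same identification of $M_n^n$ with $M_{n-1}$ through $\iota_{n-1}$, and the same reduction of the fixed points of $d=[a'\circ G_{n-1}]$ to diagrams $[t''\circ G_{n-1}]$; you obtain this reduction from the left half of $[d\circ t]$, while the paper uses $e_nO_ne_n=O_{n-2}e_n$.

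The scalar identity $\lambda(d,t)=x_{n-1}\lambda(a',t'')$ that you leave as ``expected'' is exactly what the paper verifies, and it holds as you predicted. Expanding $dt=(a'G_{n-1})(t''G_{n-1})=a't''G_{n-1}^2=x_{n-1}\lambda(a',t'')[a'\circ t''\circ G_{n-1}]$ with Lemma~\ref{jones lemma}(1) and Lemma~\ref{properties of lambda}(2) gives it directly.
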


\begin{proof}
Let $b$ be an Okada arc diagram in $J_n$, so that
$n\notin \PLab(b)$.
Consider a basis element $t\in M_n^n$. Then
$n\in \PLab(t)$.
By Proposition~\ref{structure},
$\PLab([b\circ t\circ b^*])
\preccurlyeq \PLab(b)$.
Since $n\notin \PLab(b)$, we have
$n\notin \PLab([b\circ t\circ b^*])$.
Therefore,
$\PLab([b\circ t\circ b^*])
\neq \PLab(t)$,
and hence
$b\cdot t=0$.
Thus every basis diagram in $J_n$ acts as zero on $M_n^n$. By linearity,
\begin{equation*}
\phi_n^n(a)=0
\quad
\text{for every }a\in J_n.
\end{equation*}

By the induction hypothesis, $M_{n-1}$ is a Gelfand model for $O_{n-1}$.
Let $b\in C_n \cong O_{n-1}$ be a basis arc diagram. Since
$n\in \PLab(b)$,
the diagram $b$ contains the propagating arc $h(n \arcjoin \overline{n})=n$ and therefore $b$ can be regarded as an element of $O_{n-1}$. Let $t\in M_n^n$ be a basis element. Since
$n\in \PLab(t)$,
the diagram $t$ contains the propagating arc $h(n \arcjoin \overline{n})=n$. Removing this arc gives a symmetric Okada arc diagram $t'$ of rank $n-1$ and thus there is a bijection between the basis of the Gelfand model $M_{n-1}$ and the basis of $M_n^n$ given by the map $t'\mapsto t= \iota_{n-1}(t')$. Thus the basis of $M_n^n$ can be identified with the basis of the Gelfand model $M_{n-1}$ for $O_{n-1}$.
Under this identification, the action of $b$ on $t'$ in the Gelfand model $M_{n-1}$ for $O_{n-1}$ is the same as the action of $b$ on $t$ in the module $M_n^n$ for $O_n$. Therefore, their character values are equal, and hence by linearity,
\begin{equation*}
\phi_n^n(a)=\phi_{C_n}(a) \quad \text{for every }a\in C_n.
\end{equation*}

Let $a=a'e_n$ for a basis Okada arc diagram $a'\in O_{n-2}$ and
$e_n=x_{n-1}^{-1}G_{n-1}$. Since $a'$ commutes with $e_n$ and $\lambda(a', G_{n-1})=1$ by Lemma \ref{properties of lambda}(2), we have
$a=e_na'=x_{n-1}^{-1}[G_{n-1}\circ a']$.
Let $t$ be a basis element of $M_n^r$ with $r<n$. Let
$b=[a'\circ G_{n-1}]$. 
The basis element $t$ contributes to the trace of $a$ only if
$[b\circ t\circ b^*]=t$.
Since $b=[G_{n-1}\circ a']$ and
$b^*=[a'^*\circ G_{n-1}]$, we have
$[b\circ t\circ b^*]\in e_nO_ne_n$ and we already know that $e_nO_ne_n=O_{n-2}e_n$.
Hence, if $[b\circ t\circ b^*]=t$, then $t$ must have the form
$t=[t'\circ G_{n-1}]$ for a unique basis element $t'$ of $M_{n-2}^r$. Assume $[b\circ t\circ b^*]=t$ so that $\PLab([b\circ t\circ b^*])=\PLab(t)$. 
Now using Lemma \ref{jones lemma}(1) and the associativity of the Okada monoid, we have
\begin{equation*}
\begin{aligned}
a\cdot t
&= x_{n-1}^{-1}\lambda(b,t)
[[a'\circ G_{n-1}]\circ [t'\circ G_{n-1}]\circ [a'^*\circ G_{n-1}]]\\
&= x_{n-1}^{-1} \lambda(b,t)
[a'\circ t'\circ a'^*\circ G_{n-1}].
\end{aligned}
\end{equation*}
We have
\[
at =x_{n-1}^{-1} bt = x_{n-1}^{-1} \lambda(b,t)[b \circ t]= x_{n-1}^{-1}\lambda(b,t)[a' \circ t' \circ G_{n-1}] 
\] and 
\begin{align*}
at &= x_{n-1}^{-1}\,(a'G_{n-1})(t'G_{n-1})
   && \text{since } \lambda(a',G_{n-1})=1=\lambda(t',G_{n-1}) \text{ by Lemma \ref{properties of lambda}(2)},\\
   &= x_{n-1}^{-1}\,a't'G_{n-1}G_{n-1}
   && \text{by Lemma~\ref{jones lemma}(1)},\\
   &= a't'G_{n-1}
   && \text{since } G_{n-1}G_{n-1}=x_{n-1}G_{n-1},\\
   &= \lambda(a',t')\,[a'\circ t'\circ G_{n-1}]
   && \text{since } \lambda([a'\circ t'],G_{n-1})=1
      \text{ by Lemma~\ref{properties of lambda}(2)}.
\end{align*}
 Hence we have 
$x_{n-1}^{-1} \lambda(b,t)=\lambda(a',t')$.
Thus
\begin{equation*}
a\cdot t=
\lambda(a',t')
[a'\circ t'\circ a'^*\circ G_{n-1}].
\end{equation*}
If $t$ is not of the form $[t'\circ G_{n-1}]$ then the $t$-$t$ entry of $a$ on $M_n^r$ is $0$. If $t=[t'\circ G_{n-1}]$, then that entry equals the $t'$-$t'$ entry of the action of $a'$ on $M_{n-2}^r$: both are 
$\lambda(a',t')$ when $[a' \circ t' \circ a'^*]=t'$, and both are $0$ otherwise.
Since we have the bijection
$t=[t'\circ G_{n-1}]\leftrightarrow t'$
between the basis elements contributing to the two traces, we obtain
\begin{equation*}
\phi_n^r(a'e_n)=\phi_{n-2}^r(a').
\end{equation*} By linearity, the above equation is true for all $a \in O_n $ with $a=a'e_n$ for some $a' \in O_{n-2} $.
\end{proof}
\begin{proposition}\label{B}
Let $S,T\in \YF_n$. Then $\operatorname{Hom}_{O_n}(M_n^S,M_n^T) \neq 0$ implies $T \preccurlyeq S$. In particular $M_n^S$ and $M_n^T$ do not have any common irreducible constituents for $S \neq T$.
\end{proposition}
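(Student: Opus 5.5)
The plan is to exploit the fact that every basis diagram acts on $M_n^S$ either by a nonzero scalar times a basis element of $M_n^S$ or by zero, and that diagrams whose propagating label set lies strictly below $S$ kill $M_n^S$. Fix $S\in\YF_n$ and a symmetric diagram $u\in I_n^S$; the natural test element is $u$ itself (or a scalar multiple), viewed as an element of $O_n$. First I would show that $u$ acts nontrivially on $M_n^S$. Concretely, $u\cdot u=\lambda(u,u)[u\circ u\circ u]$, and since $u$ is symmetric, Proposition~\ref{structure} together with Lemma~\ref{halves} gives $\PLab([u\circ u\circ u])=S$. Indeed, gluing $\langle u|$ to $|u\rangle$ reproduces the same propagating half-arcs, so no label is lost. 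The scalar $\lambda(u,u)$ is a monomial in the generic parameters, hence nonzero. So $u$ acts nonzero on $M_n^S$.

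Second, I would show that $u$ annihilates $M_n^T$ unless $S\preccurlyeq T$, and dually unless $T\preccurlyeq S$. For $t\in I_n^T$, Proposition~\ref{structure} gives $\PLab([u\circ t\circ u^*])\preccurlyeq\inf(S,T)$. For $u\cdot t\neq0$ we need this to equal $T$, which forces $T\preccurlyeq S$. Hence if $T\not\preccurlyeq S$, every diagram $u$ with $\PLab(u)=S$ acts as zero on $M_n^T$.

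Now suppose $\Phi\colon M_n^S\to M_n^T$ is a nonzero homomorphism. I pick a basis element $s\in I_n^S$ with $\Phi(s)\neq0$. The key step is to realize $s$, up to a nonzero scalar, as $d\cdot s_0$ for some $s_0$, where the acting element $d$ has $\PLab(d)=S$. The easiest choice is to take $d=s$ and $s_0=s$: then $s\cdot s=\lambda(s,s)[s\circ s\circ s]$. I must check that $[s\circ s\circ s]=s$; if loops only remove closed components, the outer halves $|s\rangle$ and $\langle s|$ survive, so $[s\circ s\circ s]=s$. This gives $s=\lambda(s,s)^{-1}s\cdot s$. Therefore
\[
\Phi(s)=\lambda(s,s)^{-1}\,s\cdot\Phi(s),
\]
and since $\Phi(s)\in M_n^T$, the second step forces this to be zero unless $T\preccurlyeq S$. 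This gives the first claim. For the second claim, a common irreducible constituent of $M_n^S$ and $M_n^T$ yields, by semisimplicity, nonzero homomorphisms in both directions. That gives $T\preccurlyeq S$ and $S\preccurlyeq T$, hence $S=T$ by antisymmetry of the partial order. Condition (B) then follows, because distinct $S$ with different maxima are distinct.

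The main obstacle is the identity $[s\circ s\circ s]=s$, together with the nonvanishing of $\lambda(s,s)$. If $s$ is not idempotent up to a scalar, I would instead use Lemma~\ref{halves}. The element $[s\circ s\circ s]$ has the same left half as $s$, so iterating $x\mapsto s\cdot x$ on the finite set $I_n^S$ permutes a subset cyclically. A suitable power $s^k$ then acts on $s$ as a nonzero scalar, and the same argument goes through with $s^k$ in place of $s$, since $\PLab$ of $s^k$ is still $S$.
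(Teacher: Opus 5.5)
Your proof is correct and follows essentially the same route as the paper's. You pick $s\in I_n^S$ with $\Phi(s)\neq0$, use $s\cdot s=\lambda(s,s)\,s$ with $\lambda(s,s)\neq0$ to write $\Phi(s)$ as a nonzero multiple of $s\cdot\Phi(s)$, let Proposition~\ref{structure} force $T\preccurlyeq S$, and finish with semisimplicity and antisymmetry. Your fallback paragraph is unnecessary, and its cyclic-orbit argument would not work as stated, since the orbit of $s$ need not return to $s$. It is not needed because $[s\circ s]=s$ holds directly for symmetric $s$: the facing full arcs of $\langle s|$ and $|s\rangle$ close into loops, and each propagating arc $a_i\arcjoin\overline{a_i}$ joins a copy of itself with the same height label.
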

\begin{proof}
Suppose there is a nonzero $O_n$-module homomorphism
$\phi:M_n^S\to M_n^T$. Choose $t\in I_n^S$ such that
$\phi(t)\neq0$. Since $t=t^*$, we have
$t\cdot t=\alpha_t t$ for some nonzero scalar $\alpha_t$. Hence
$\phi(t\cdot t)=\alpha_t\phi(t)\neq0$.
Since $\phi$ is a module homomorphism,
$\phi(t\cdot t)=t\cdot\phi(t)$.
Write
$\phi(t)=\sum_{u\in I_n^T}a_u u$.
Since $t\cdot\phi(t)\neq0$, there exists some $u\in I_n^T$ with
$a_u\neq0$ such that $t\cdot u\neq0$. This implies
$\PLab([t\circ u\circ t^*])=T$.
Since $t=t^*$,
$T=\PLab([t\circ u\circ t])$.
By Proposition~\ref{structure},
$T\preccurlyeq\PLab(t)=S$.
Thus, if there is a nonzero module homomorphism from $M_n^S$ to $M_n^T$, then
$T\preccurlyeq S$.

Since $O_n$ is semisimple, if $M_n^S$ and $M_n^T$ have a common irreducible constituent, then there are nonzero $O_n$-module homomorphisms in both directions. Hence
$T\preccurlyeq S$ and $S\preccurlyeq T$, which implies $T=S$.
\end{proof}
It follows from Proposition \ref{B} that $\operatorname{Hom}_{O_n}(M_n^r,M_n^s)=0$ if $r \neq s$ for $0 \leq r,s \leq n$ and therefore they do not have any common irreducible constituents. This together with Proposition~\ref{A} completes the proof of Theorem~\ref{main}.

\section{Irreducible modules for the Okada algebra}

We have constructed a representation for the Okada algebra $O_n$ which
contains each irreducible representation exactly once. We use this model
to obtain all the irreducible representations of the algebra. We then establish the isomorphism of these irreducible modules with the cell modules defined in \cite{HivertScott} and conclude that the Bratteli
diagram of the tower of Okada algebras $\{O_n\}_{n\geq 1}$ is the
Young--Fibonacci lattice.

\begin{theorem}
The Okada modules $M_n^S$, for all Fibonacci sets $S$ of rank $n$,
constitute a complete list of irreducible modules for $O_n(X,Y)$.
\end{theorem}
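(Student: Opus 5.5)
We want to show that each $M_n^S$ is irreducible, that $M_n^S\not\cong M_n^T$ for $S\neq T$, and that every irreducible module occurs. The plan is to combine the Gelfand model property (Theorem~\ref{main}(2)) with Proposition~\ref{B} and a counting argument.

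First, every irreducible $O_n$-module occurs in $M_n=\bigoplus_{S\in\YF_n}M_n^S$ with multiplicity exactly one, by Theorem~\ref{main}(2) and Remark~\ref{nonempty}. So $M_n$ is multiplicity free, with exactly $|\Lambda_{O_n}|$ irreducible constituents. Hence each $M_n^S$ is a direct sum of pairwise non-isomorphic irreducibles. By Proposition~\ref{B}, no two distinct summands $M_n^S$ and $M_n^T$ share a constituent. The constituent sets of the $M_n^S$ therefore partition $\Lambda_{O_n}$. Each part is nonempty, since $M_n^S\neq0$ by Remark~\ref{nonempty}.

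Second, compare cardinalities. The recursion \eqref{recursion}, $|\Lambda_{O_n}|=|\Lambda_{O_{n-1}}|+|\Lambda_{O_{n-2}}|$, together with the base cases $|\Lambda_{O_1}|=1$ and $|\Lambda_{O_2}|=2$ (Example~\ref{2 case}), gives $|\Lambda_{O_n}|=F_n$. The number of Fibonacci sets of rank $n$ is also $F_n$. One sees this from the rank recursion in $\YF$, or directly: removing the largest element, or recording that $n\notin S$, gives the recursion $|\YF_n|=|\YF_{n-1}|+|\YF_{n-2}|$. Alternatively, one can quote Okada's indexing, which the paper assumes from \cite[Theorem 2.6]{Okada}.

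It follows that $F_n$ nonempty, pairwise disjoint subsets partition a set of size $F_n$. Each subset must then be a singleton, so each $M_n^S$ is irreducible. Disjointness gives $M_n^S\not\cong M_n^T$ for $S\ne T$, and the partition covers every irreducible, so the list is complete.

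The only real point to check is the cardinality identity $|\YF_n|=|\Lambda_{O_n}|$. I expect this to be the mild obstacle: the bijection between rank-$n$ Fibonacci sets and rank-$n$ binary words must be invoked carefully, via Remark~\ref{YF equivalence} or by direct verification of the recursion for Fibonacci sets. Everything else is formal.
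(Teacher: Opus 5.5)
Your proposal is correct and follows the paper's proof essentially step for step. The Gelfand model property, Proposition~\ref{B} and Remark~\ref{nonempty} make the constituent sets of the $M_n^S$ a partition of $\Lambda_{O_n}$ into nonempty parts, and the count $|\Lambda_{O_n}|=F_n=|\YF_n|$ (from \eqref{recursion} and the base cases) forces each part to be a singleton; your check of $|\YF_n|=|\YF_{n-1}|+|\YF_{n-2}|$, splitting on whether $n\in S$ and using parity to see $n-1\notin S$ when $n\notin S$, supplies a detail the paper simply takes as known.
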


\begin{proof}
Note that by \eqref{recursion}, $\Lambda_{O_n} = \Lambda_{O_{n-1}} \sqcup \Lambda_{O_{n-2}} $ and we already showed that the number of pairwise non-isomorphic irreducible representations for $O_1$ and $O_2$ is one and two, respectively. Thus it follows that the number of irreducible representations of $O_n$ is the $n$th Fibonacci number $F_n$. Since $M_n= \bigoplus_{S \in \YF_n}M_n^S$ is a Gelfand model, it has exactly $F_n$ irreducible constituents. By Proposition~\ref{B}, $M_n^S$ and $M_n^T$ do not contain any isomorphic irreducible submodule for $S\neq T$, and $M_n^S \neq 0$ for every  $S \in \YF_n $ by Remark \ref{nonempty}. Since the cardinality of $\YF_n$ is $F_n$, each $M_n^S$ is an irreducible module, and they are pairwise non-isomorphic. 
\end{proof}
\begin{remark}
    We obtain all the irreducible representations of $O_n$ without any prior information about the irreducibles beyond their dimensions, which are used in Proposition \ref{split} in showing that the algebra $O_n$ is split semisimple. If the splitness of the algebra could be established without appealing to any information about the irreducibles, this would give a completely independent construction of the irreducible $O_n$-modules via the Jones basic construction and the diagram realization independent of Okada's original one. Note that even though Okada's construction and ours are realized on the same space of symmetric diagrams (equivalently, on paths by Fomin's correspondence), Okada's action is more involved as the action of a generator on a basis element gives a nontrivial linear combination of the basis elements whereas the action defined in \eqref{action}, being a conjugation action, sends a basis element to a scalar multiple of a single basis element. 
\end{remark}
Hivert and Scott prove that the Okada algebra $O_n(X,Y)$ is cellular in \cite[Theorem 6.3]{HivertScott}.
The left $O_n(X,Y)$ cell module $V^S$ associated to $S \in \YF_n$ is defined on the vector space spanned by rank $n$ half Okada arc diagrams $|t \rangle$ such that $\PLab(|t \rangle)=S$ equipped with the left action $\bullet$ by extending linearly the formula
\begin{equation}\label{Scottmodules}
    d \bullet |t \rangle := \begin{cases}
    \lambda(d,t)| [d \circ t] \rangle \quad &\text{if $\PLab([d \circ t])=S$}\\
    0 \quad & \text{otherwise}
    \end{cases}
\end{equation}
where $d \in \mathcal{D}_n$. Note that the action \eqref{Scottmodules} is independent of the choice of a representative $t$ with $ |t \rangle$ equal to the given half diagram by \cite[Remark 6.4]{HivertScott}.
\begin{proposition}\label{iso}
    The modules $V^S$ and $M_n^S$ are isomorphic as $O_n(X,Y)$-modules for generic $X$ and $Y$.
\end{proposition}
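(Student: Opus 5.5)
The natural approach is to write down an explicit linear map $\Psi\colon M_n^S\to V^S$, $\Psi(t)=|t\rangle$ for $t\in I_n^S$, and check that it is a bijective module map. A symmetric diagram is determined by its left half, since $t=t^*$ means its right half is the mirror of $|t\rangle$. So $\Psi$ is injective on basis elements. For surjectivity, given a half diagram $H$ with $\PLab(H)=S$, I would use the gluing lemma \cite[Lemma 4.4]{HivertScott}: $H$ glued with its own mirror image $H^*$ along the propagating half-arcs (matched in order of their labels, which agree) gives a symmetric Okada arc diagram $t$ with $|t\rangle=H$ and $\PLab(t)=S$. This gives a bijection $I_n^S\to$ basis of $V^S$, so $\Psi$ is a linear isomorphism. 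In particular $\dim V^S=\dim M_n^S=|I_n^S|$.

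Next I would check equivariance for each $d\in\mathcal D_n$ and $t\in I_n^S$. By Lemma~\ref{halves}, $\PLab([d\circ t\circ d^*])=S$ if and only if $\PLab([d\circ t])=S$, so $d\cdot t$ and $d\bullet|t\rangle$ vanish in exactly the same cases. In the nonzero case,
\[
\Psi(d\cdot t)=\lambda(d,t)\,\bigl|[d\circ t\circ d^*]\bigr\rangle=\lambda(d,t)\,\bigl|[d\circ t]\bigr\rangle=d\bullet|t\rangle,
\]
where the middle equality is the second half of Lemma~\ref{halves}. The scalars coincide because both actions use the same $\lambda(d,t)$. This is legitimate because the cell action is independent of the representative $t$ by \cite[Remark 6.4]{HivertScott}, and we have chosen the symmetric representative. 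Hence $\Psi$ is an $O_n$-module isomorphism, and the identification holds for any parameters for which both modules are defined. Genericity enters only insofar as the earlier results identify these modules as irreducible.

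The main obstacle is the surjectivity step. One must confirm that gluing $H$ with $H^*$ really yields a valid Okada arc diagram: it must be noncrossing, and the height labels on propagating arcs must be consistent. The propagating half-arcs of $H$ and $H^*$ carry the same label sets and occur in the same vertical order, so the noncrossing gluing pairs equal labels. This is exactly the content of the gluing lemma together with \cite[Lemma 5.15]{HivertScott}, already invoked in Remark~\ref{nonempty}.

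If that citation is not quite in the needed form, a fallback is a dimension comparison. The map $\Psi$ is injective and equivariant, and $V^S$ has dimension $F_S$, the number of paths to $S$, by cellularity \cite{HivertScott}. Meanwhile $M_n^S$ is irreducible of the same dimension, being the constituent indexed by $S$ in the Gelfand model. So $\Psi$ is an isomorphism.
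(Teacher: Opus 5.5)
Your proposal is correct and follows the paper's proof essentially step for step. The paper also uses the map $t\mapsto|t\rangle$, a linear bijection $M_n^S\to V^S$ by the gluing lemma, and uses Lemma~\ref{halves} to show that $d\cdot t$ and $d\bullet|t\rangle$ vanish in exactly the same cases and that $|[d\circ t\circ d^*]\rangle=|[d\circ t]\rangle$ with the common scalar $\lambda(d,t)$. Your dimension-count fallback is not needed, and as written it assumes $\dim M_n^S=F_S$, which itself rests on that same gluing bijection.
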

\begin{proof}
    For a symmetric diagram $t \in I_n^S$, $| t \rangle = \langle t |$ and conversely given an Okada half arc diagram $H$ of rank $n$, there is a unique symmetric diagram $t \in I_n^S$ with $ | t \rangle =H=  \langle t |$ and $\PLab(t)=S$ by the gluing lemma \cite[Lemma 4.4]{HivertScott}.
Hence the map $\phi: M_n^S \to V^S$ defined by $\phi(t)= | t \rangle$ extends to a linear bijection.

Recall that for a symmetric diagram $t \in I_n^S$ and $d \in \mathcal{D}_n$, $\PLab([ d \circ t \circ d^*])= \PLab(t)=S$ if and only if $\PLab([d \circ t])=S$ by Lemma \ref{halves}.
Therefore $\PLab([d \circ t]) \neq S$ if and only if $\PLab([d \circ t \circ d^*]) \neq S$; hence $d \bullet \phi(t)=0$  if and only if $d \cdot t =0$.
In the remaining case, $\PLab([d \circ t]) = \PLab([ d \circ t \circ d^*])=S$ and
\[
\phi(d \cdot t)= \lambda(d,t) | [ d \circ t \circ d^*] \rangle = \lambda(d,t) |[d \circ t]\rangle = d \bullet \phi(t).
\]
Hence the map $\phi$ is an isomorphism of $O_n(X,Y)$-modules. 
\end{proof}
\begin{remark}
For $S\in\YF_n$, let $v$ be the word in the alphabet $\{1,2\}$ corresponding to
$S$ under the bijection of Remark~\ref{YF equivalence}. By
Proposition~\ref{iso}, we have $M_n^S\cong V^S$, and the cell module $V^S$ is
isomorphic to the irreducible $O_n(X,Y)$-module $V_v$ constructed by Okada
in \cite{Okada}; this identification is made in \cite[\S7.1]{HivertScott}.
\end{remark}
\begin{remark}
Recall that the \emph{Bratteli diagram} of the tower of Okada algebras
$\{O_n(X,Y)\}_{n\geq 1}$ is the graph with vertices
$\bigsqcup_{r\geq 1}\Lambda_{O_r}$ and $k$ edges between
$\lambda\in\Lambda_{O_n}$ and $\mu\in\Lambda_{O_{n-1}}$ if the restriction
of the irreducible representation $V^\lambda$ of $O_n$ to $O_{n-1}$
contains $k$ copies of the irreducible representation $V^\mu$ of
$O_{n-1}$ in its decomposition. Then \cite[Proposition 6.6]{HivertScott} together with the isomorphism in Proposition \ref{iso} proves that the Bratteli diagram of the tower of Okada algebras $\{O_n(X,Y)\}_{n\geq 1}$ is the Young--Fibonacci lattice. 
\end{remark}
\section*{Acknowledgements}
The author thanks Sankaran Viswanath for suggesting the problem and for many helpful discussions. The author also thanks Jeanne Scott and Florent Hivert for clarifications regarding their work and for sending the preprint \cite{HivertScott}, and Arun Ram for helpful clarifications regarding \cite{HalversonRam}.
\section*{Declaration on the use of AI tools}
In preparing this manuscript, the author used ChatGPT (OpenAI) and Claude (Anthropic) to polish sentences, to check references and to identify typographical and expository errors. No mathematical content was generated by AI tools. The author reviewed and verified the entire manuscript and takes full responsibility for its contents.
\bibliographystyle{amsalpha}
\bibliography{references}

\end{document}